\definecolor{ao(english)}{rgb}{0.0, 0.5, 0.0}
\newtheorem{definition}{Definition}
\newtheorem{lemma}{Lemma}
\newtheorem{theorem}{Theorem}
\newtheorem{proposition}{Proposition}
\newtheorem{corollary}{Corollary}
\newcommand*\circled[1]{\tikz[baseline=(char.base)]{
            \node[shape=circle,draw,inner sep=2pt] (char) {#1};}}
\newcommand{\rmv}[1]{}
\pgfplotsset{compat=1.14}
\begin{document}
\title{Maximal entries of elements in certain matrix monoids}
\author{Sandie Han, Ariane M. Masuda, Satyanand Singh, and Johann Thiel}
\date{\today}
\address{Department of Mathematics, New York City College of Technology \newline (CUNY), 300 Jay Street,
Brooklyn, New York 11201}
\email{\{shan,amasuda,ssingh,jthiel\}@citytech.cuny.edu}
\thanks{The second author received support for this project provided by a PSC-CUNY Award, \#69227-00 47, jointly funded by The Professional Staff Congress and The City University of New York.}
\keywords{Calkin-Wilf tree, collision resistance, Fibonacci sequence, hashing function, Lucas sequence, matrix monoid, partial ordering}

\begin{abstract}
Let $L_u=\begin{bmatrix}1 & 0\\u & 1\end{bmatrix}$ and $R_v=\begin{bmatrix}1 & v\\0 & 1\end{bmatrix}$
be matrices in $SL_2(\mathbb Z)$ with $u, v\geq 1$. Since the monoid generated by $L_u$ and $R_v$ is free, we can associate a depth to each element based on its product representation. In the cases where $u=v=2$ and $u=v=3$, Bromberg, Shpilrain, and Vdovina found a depth $n$ matrix containing the maximal entry for each $n\geq 1$.  By using ideas from our previous work on positive linear fractional transformation $(u,v)$-Calkin-Wilf trees and a polynomial partial ordering, we extend their results for any $u, v\geq 1$ and in the process we recover the Fibonacci and some Lucas sequences. As a consequence we obtain bounds which guarantee collision resistance on a family of Cayley hash functions based on $L_u$ and $R_v$.
 \end{abstract}
\maketitle


\section{Introduction}

Let $L_u:=\begin{bmatrix}1 & 0\\u & 1\end{bmatrix}$ and  $R_v:=\begin{bmatrix}1 & v\\0 & 1\end{bmatrix}$ where $u$ and $v$ are fixed positive integers.  These matrices have been considered in the study of growth and expansion in groups, expander graphs, and in connection with the girth of certain Cayley graphs in $SL_2(\mathbb{F}_p)$ ($p$ a prime) in important works by Bourgain and Gamburd~\cite{BG}, and Helfgott~\cite{H1}. We are interested in the monoid generated by $L_u$ and $R_v$.  Nathanson~\cite{N1} provided a simple proof that this monoid is free. That is, every non-identity element $M$ in the monoid generated by $L_u$ and $R_v$ can be written as an alternating product of positive powers of $L_u$ and $R_v$ in a unique way. We refer to the sum of these powers as the depth\footnote{Some readers may prefer the term length, but our choice of the word depth will be made clear shortly.} of $M$. For example, if $M=L_u^3R_v^{24}L_u^7$, then the depth of $M$ is $34$. We define the depth of the identity matrix
as zero.

In 2017, Bromberg, Shpilrain, and Vdovina~\cite{BSV} proposed a Cayley hash function\footnote{The term Cayley hash function comes from the connection to Cayley graphs~\cite{H} mentioned in the previous paragraph.} based on $L_u$ and $R_v$ in $SL_2(\mathbb{F}_p)$. Bromberg~\cite{B} and Bromberg, Shpilrain, and Vdovina~\cite{BSV} measured the collision resistance of these Cayley hash functions by finding a depth $n$ matrix containing the maximal entry in the cases where $u=v=2$ and $u=v=3$ for each $n\geq 1$. The proof is by induction. They show that if $M$ is a depth $n$ matrix containing the maximal entry and maximal column sum, then either $L_uM$ or $R_vM$, depending on the parity of $n$, must be a depth $n+1$ matrix containing the maximal entry and maximal column sum.

The goal of this paper is to answer and expand upon some open questions\footnote{The original questions in~\cite{B,BSV} only ask about the case $u=1$ and $v=2$ (or vice versa).} appearing in~\cite{B,BSV}. Our main contribution is to extend the above result to the general case $u,v\geq 1$ (Theorem~\ref{main}). In the case where $u,v\geq 2$, our method uses a similar induction argument as above. When either $u=1$ or $v=1$, this induction argument fails. We indicate at the end of Proposition~\ref{ineq} exactly where the problem occurs. In this case the situation is more complicated, requiring a different strategy. The novelty of our approach lies in the framework of the Calkin-Wilf tree. This tree provides a rich, highly symmetric structure that allows us to organize the elements of the monoid in an intuitive way. Together with a partial ordering for polynomials, this yields a proof for the more intricate case.

The Calkin-Wilf tree is a rooted tree whose vertices are labeled by positive rational numbers.  Starting from the root 1, each vertex $a/b$ has two children: the left one is $a/(a+b)$ and the right one is $(a+b)/b$ (see Figure~\ref{fig:CWtree}).  A nice feature of this tree is that every positive rational number appears in the tree exactly once and in reduced form. In past decades, the Calkin-Wilf tree has received a lot of attention due to its many remarkable properties; see~\cite{CW,N2} for a more thorough history of this material.

\begin{figure}[ht!]
\begin{center}
\begin{tikzpicture}[sibling distance=15pt]
\tikzset{level distance=30pt}
\Tree[.$1/1$ [.$1/2$ [.$1/3$ $1/4$ $4/3$ ]
   [.$3/2$ $3/5$ $5/2$ ] ] [.$2/1$ [.$2/3$  $2/5$ $5/3$ ] [.$3/1$ $3/4$ $4/1$ ] ]]
\end{tikzpicture}
\end{center}
\caption{The first four rows of the Calkin-Wilf tree.}\label{fig:CWtree}
\end{figure}

For a generalization of the Calkin-Wilf tree, one can consider a modified generation rule for each vertex $a/b$: its left child is defined by $a/(ua+b)$ and its right child by $(a+vb)/b$.  Such a tree is called the {\emph{$(u,v)$-Calkin-Wilf tree}} and its vertices are a subset of the vertices of the Calkin-Wilf tree. In fact, given  $uv>1$, the positive rationals can be partitioned in pairwise disjoint $(u,v)$-Calkin-Wilf trees with roots coming from a special ``orphan" set~\cite{HMST1,HMST2,N2}.

Another generalization of the Calkin-Wilf tree can be made for rooted trees whose vertices are matrices in $GL_2(\mathbb{N}_0)$ with a generation rule based on matrix multiplication with a pair of fixed matrices $L$ and $R$. Nathanson refers to such trees as generalized Calkin-Wilf trees for positive linear transformations\footnote{We also use the term PLFT here since there is a clear isomorphism between the monoid of PLFTs (under function composition) and $GL_2(\mathbb{N}_0)$. For a proof of this fact, see~\cite{N2} and~\cite{HMST2}.} (PLTFs)~\cite{N2}. In the case where $L=L_u$ and $R=R_v$, we construct the {\emph{PLFT $(u,v)$-Calkin-Wilf tree}}, denoted by $\mathcal{T}^{(u,v)}(M)$, according to the following generation rules:

\begin{enumerate}
\item the root is labeled $M$,
\item the left child of a vertex $\begin{bmatrix}a & b\\c & d\end{bmatrix}$ is labeled $\begin{bmatrix}a & b\\ua+c & ub+d\end{bmatrix}$, and
\item the right child of a vertex $\begin{bmatrix}a & b\\c & d\end{bmatrix}$ is labeled $\begin{bmatrix}a+vc & b+vd\\c & d\end{bmatrix}$.
\end{enumerate}

\begin{figure}[ht!]
\begin{center}
\begin{tikzpicture}[every tree node/.style={font=\tiny,anchor=base}, sibling distance=-8.5pt]
\tikzset{level distance=55pt}
\Tree[.${\begin{bmatrix} 1 & 0\\ 0 & 1\end{bmatrix}}$ \edge node[auto=right] {\tiny{$L_u$}}; [.${\begin{bmatrix} 1 & 0\\ u & 1\end{bmatrix}}$ \edge node[auto=right] {\tiny{$L_{u}^2$}}; [.${\begin{bmatrix} 1 & 0\\ 2u & 1\end{bmatrix}}$
\edge node[auto=right] {\tiny{$L_{u}^3$}};[.${\begin{bmatrix} 1 & 0\\ 3u & 1\end{bmatrix}}$ ]
\edge node[auto=left] {\tiny{$R_vL_{u}^2$}}; [.${\begin{bmatrix} 1+2uv & v\\ 2u & 1\end{bmatrix}}$ ] ]
\edge node[auto=left] {\tiny{$R_vL_u$}}; [.${\begin{bmatrix} 1+uv & v\\ u & 1\end{bmatrix}}$
\edge node[auto=right] {\tiny{$L_uR_vL_u$}}; [. ${\begin{bmatrix} 1+uv & v\\ u(2+uv) & 1+uv\end{bmatrix}}$ ]
\edge node[auto=left] {\tiny{$R_v^2L_u$}};   [.${\begin{bmatrix} 1+2uv & 2v\\ u & 1\end{bmatrix}}$ ] ]
    ] \edge node[auto=left] {\tiny{$R_v$}}; [.${\begin{bmatrix} 1 & v\\ 0 & 1\end{bmatrix}}$ \edge node[auto=right] {\tiny{$L_uR_v$}}; [.${\begin{bmatrix} 1 & v\\ u & 1+uv\end{bmatrix}}$
   \edge node[auto=right] {\tiny{$L_u^2R_v$}}; [.${\begin{bmatrix} 1 & v\\ 2u & 1+2uv\end{bmatrix}}$ ]
   \edge node[auto=left] {\tiny{$R_vL_uR_v$}}; [.${\begin{bmatrix} 1+uv & v(2+uv)\\ u & 1+uv\end{bmatrix}}$ ]  ]
  \edge node[auto=left] {\tiny{$R_v^2$}}; [.${\begin{bmatrix} 1 & 2v\\ 0 & 1\end{bmatrix}}$
   \edge node[auto=right] {\tiny{$L_uR_v^2$}};[.${\begin{bmatrix} 1 & 2v\\ u & 1+2uv\end{bmatrix}}$ ]
  \edge node[auto=left] {\tiny{$R_v^3$}}; [.${\begin{bmatrix} 1 & 3v\\ 0 & 1\end{bmatrix}}$ ]
   ] ]]
\end{tikzpicture}
\end{center}
\caption{The first four rows of the $\mathcal{T}^{(u,v)}(I_2)$ tree.}\label{fig:idexample}
\end{figure}

In Figure~\ref{fig:idexample1} in the Appendix we illustrate the $\mathcal{T}^{(1,1)}(I_2)$ tree. By associating a reduced rational number $a/b$ to the vector $\begin{bmatrix}a \\b\end{bmatrix}$ and considering the column vectors of each matrix, it can be seen that $\mathcal{T}^{(1,1)}(I_2)$ is essentially running two Calkin-Wilf trees in parallel that are independent of each other.

This PLFT $(u,v)$-Calkin-Wilf tree perspective helps us understand the monoid generated by $L_u$ and $R_v$, because the left child and the right child of a vertex $M$ are obtained precisely by multiplying $M$ (on the left) by $L_u$ and $R_v$, respectively. Furthermore, the PLFT $(u,v)$-Calkin-Wilf tree organizes the elements in the monoid generated by $L_u$ and $R_v$ by depth $n$ (see Figure~\ref{fig:idexample}). This organization is highly symmetric, a property which is exploited in this paper. A visual inspection of the forms of the entries in terms of $u$ and $v$ in Figure~\ref{fig:idexample} suggests our results. To highlight our key ideas and make them more clear, we provide several examples and a concise summary of our arguments. See Figures~\ref{fig:idexample1}-\ref{fig:idexample4} in the Appendix for an illustration of the first five rows of the $\mathcal{T}^{(1,1)}(I_2)$,
$\mathcal{T}^{(5,2)}(I_2)$, $\mathcal{T}^{(2,5)}(I_2)$, and  $\mathcal{T}^{(5,1)}(I_2)$ trees.

\subsection{Examples}

We begin with a few different examples illustrating various cases. Let $M_{i,j}$ denote the $(i,j)$-entry of $M$.

\begin{itemize}
\item In the case of the $\mathcal{T}^{(1,1)}(I_2)$ tree (see Figure~\ref{fig:idexample1}), the numerical values of the maximal entries and their corresponding positions in depth $n$ are:
\begin{table}[!ht]
\centering
\begin{tabular}{ccccccccc}
1 & $\rightarrow$ &  1 & $\rightarrow$ &  2 & $\rightarrow$ &  3 & $\rightarrow$ &  $\cdots$,\\
$I_{1,1}$  & $\rightarrow$ &  $(L_1)_{2,1}$  & $\rightarrow$ &  $(R_1L_1)_{1,1}$ & $\rightarrow$ & $(L_1R_1L_1)_{2,1}$ & $\rightarrow$ & $\cdots$.
\end{tabular}
\end{table}

\noindent We observe that the first five maximal entries coincide with the first five terms of the Fibonacci sequence. We also observe that a path of maximal entries exhibits an alternating $L_u$ and $R_v$ pattern. A close examination of the $\mathcal{T}^{(1,1)}(I_2)$ tree reveals that the path given above are not unique since
$$I_{1,1} \;\rightarrow\; (R_1)_{1,2} \;\rightarrow\; (L_1R_1)_{2,2}\;\rightarrow\;
(R_1L_1R_1)_{1,2} \;\rightarrow\; \cdots$$
provides another path to obtain the maximal entries. It is also interesting to see that this path is the symmetric image of the previous path.

\item In the case of the $\mathcal{T}^{(5,2)}(I_2)$ tree (see Figure~\ref{fig:idexample2}), the numerical values of the maximal entries and their corresponding positions in depth $n$ are:

\begin{table}[!ht]
\centering
\begin{tabular}{ccccccccc}
1 & $\rightarrow$ &  5 & $\rightarrow$ &  11 & $\rightarrow$ &  60 & $\rightarrow$ & $\cdots$,\\
$I_{1,1}$  & $\rightarrow$ &  $(L_5)_{2,1}$  & $\rightarrow$ &  $(R_2L_5)_{1,1}$ & $\rightarrow$ & $(L_5R_2L_5)_{2,1}$  & $\rightarrow$ &  $\cdots$.
\end{tabular}
\end{table}

\noindent We observe the generalized Fibonacci relations $11=5\cdot v+1$, $60=11\cdot u + 5$ and $131=60\cdot v +11$, and the path exhibits the same alternating $L_u$ and $R_v$ pattern as before. Furthermore, a close examination of $\mathcal{T}^{(5,2)}(I_2)$ reveals that for odd depth, the maximal entry appears to occur uniquely in one position, whereas, for even depth, the same is not true.

\item In the case of the $\mathcal{T}^{(2,5)}(I_2)$ tree (see Figure~\ref{fig:idexample3}), the numerical value of the maximal entries and their corresponding positions at depth $n$ are:

\begin{table}[!ht]
\centering
\begin{tabular}{ccccccccc}
1 & $\rightarrow$ &  5 & $\rightarrow$ &  11 & $\rightarrow$ &  60 & $\rightarrow$ & $\cdots$,\\
$I_{2,2}$  & $\rightarrow$ & $(R_5)_{1,2}$  & $\rightarrow$ & $(L_2R_5)_{2,2}$ & $\rightarrow$ & $(R_5L_2R_5)_{1,2}$  & $\rightarrow$ &  $\cdots$.
\end{tabular}
\end{table}

\noindent Note that the maximal values of the $\mathcal{T}^{(2,5)}(I_2)$ tree are exactly the same as those of $\mathcal{T}^{(5,2)}(I_2)$ tree, and the path of the maximal entries is the symmetric counterpart of the path in $\mathcal{T}^{(5,2)}(I_2)$.

\item In the case of the $\mathcal{T}^{(5,1)}(I_2)$ tree (see Figure~\ref{fig:idexample4}), the numerical values of the maximal entries and their corresponding positions at depth $n$ are:

\begin{table}[!ht]
\centering
\begin{tabular}{ccccccccc}
1 & $\rightarrow$ &  5 & $\rightarrow$ &  10 & $\rightarrow$ &  35 & $\rightarrow$ &  $\cdots$,\\
$I_{2,2}$  & $\rightarrow$ & $(L_5)_{2,1}$ & $\rightarrow$ & $(L_5)^2_{2,1}$ & $\rightarrow$ & $(L_5R_1L_5)_{2,1}$ & $\rightarrow$ &   $\cdots$.
\end{tabular}
\end{table}

\noindent Note that this path does not exhibit the nice alternating $L_u$ and $R_v$ pattern and the generalized Fibonacci relation fails here.  Although the first two terms of the sequence follow the Fibonacci relation, $10 = 5 + 1\cdot u$ and $35 = 10 + 5\cdot u$, the third term does not since $65\neq 35 + 10\cdot u$.
\end{itemize}

\subsection{The maximal entry in $\mathcal{T}^{(u,v)}(I_2;n)$}

We denote by $\mathcal{T}^{(u,v)}(M;n)$ the (finite) set of matrices of depth $n$ in $\mathcal{T}^{(u,v)}(M)$, where $n\ge 0$. We often refer to $\mathcal{T}^{(u,v)}(M;n)$ as the $n^\text{th}$ row of the tree. In our search for the maximal entry among all matrices of $\mathcal{T}^{(u,v)}(M;n)$, not only do we observe Fibonacci-like sequence relations among the maximal entries, we are also able to provide an explicit formula for the maximal entry at each depth.

We define the function $\mu:GL_2(\mathbb{N}_0)\to \mathbb{N}$ by $\mu\left(\begin{bmatrix} a & b\\ c & d\end{bmatrix}\right)=\max\{a,b,c,d\}$, which computes the maximal entry of a matrix $M\in GL_2(\mathbb{N}_0)$. For a finite subset $S$ of $GL_2(\mathbb{N}_0)$, we extend the definition of $\mu$ to $S$ by $\mu(S)=\max\{\mu(M):M\in S\}$. So $\mu(\mathcal{T}^{(u,v)}(M;n))$ is the maximal entry among all matrices of depth $n$ in the $\mathcal{T}^{(u,v)}(M)$ tree.
In our main theorem (Theorem 1) we show that for $n\geq 0$ and fixed $u\geq v \geq 1$:

\begin{align*}
\mu(\mathcal{T}^{(u,v)}(I_2;n)) = &
 \begin{cases}
  ((L_uR_v)^{(n-1)/2}L_u)_{2,1} & \text{ when } n \text{ is odd}, \\
  (L_u(L_uR_v)^{n/2}L_u)_{2,1}  & \text{ when } n \text{ is even and } u>v=1,\\
  ((R_vL_u)^{n/2})_{1,1} & \text{ otherwise}.
\end{cases} \\
\end{align*}
Similarly, by symmetry, for $n\geq 0$ and fixed $v\geq u \geq 1$:
\begin{align*}
\mu(\mathcal{T}^{(u,v)}(I_2;n)) = &
 \begin{cases}
  ((R_vL_u)^{(n-1)/2}R_v)_{1,2} & \text{ when } n \text{ is odd}, \\
  (R_v(R_vL_u)^{n/2}R_v)_{1,2}  & \text{ when } n \text{ is even, and } v>u=1,\\
   ((L_uR_v)^{n/2})_{2,2} & \text{ otherwise}.\\
\end{cases} \\
\end{align*}

Theorem~\ref{main} provides an explicit formula in terms of $u$, $v$ and $n$ for computing $\mu(\mathcal{T}^{(u,v)}(I_2;n))$. We define a recursive sequence $F_n^{(u,v)}$ by $F_0^{(u,v)}=1$, $F_1^{(u,v)}=\max\{u,v\}$, and for $n>1$
\begin{align*}
    F_n^{(u,v)}=\begin{cases}
        \max\{u,v\}\cdot F_{n-1}^{(u,v)}+F_{n-2}^{(u,v)} & \text{ for $n$ odd,}\\
        \min\{u,v\}\cdot F_{n-1}^{(u,v)}+F_{n-2}^{(u,v)} & \text{ for $n$ even.}
        \end{cases}
\end{align*}
Theorem~\ref{main} also shows that when $u,v>1$ or $u=v=1$, $\mu(\mathcal{T}^{(u,v)}(I_2;n))=F_n^{(u,v)}$, that is, the maximal entries coincide with the terms of the above generalized Fibonacci sequence. In particular, $\mu(\mathcal{T}^{(1,1)}(I_2;n))=F_n^{(1,1)}$ gives precisely the $n^{\text{th}}$ Fibonacci number. When $u=v$, $F_n^{(u,u)}$ is a Lucas sequence. This result is analogous to a theorem on the largest values of the Stern sequence by Lucas expanded upon by Paulin~\cite{L,P} and is reminiscent of the work Bates, Bunder, and Tognetti~\cite{BBT1,BBT2} in locating specific rationals within the Calkin-Wilf and Stern-Brocot trees.

Our approach in the proof of Theorem~\ref{main} is to consider the entries of each matrix as polynomials in one variable following the $(u,v)$-Calkin-Wilf tree generation rule (see Figures~\ref{fig:idexample} and~\ref{fig:fun_uv}). The difficulty lies in the determination of the different rates of growth of the polynomial functions in Proposition~\ref{polyentry} as the result of the generation rule, that is, whether the left child (multiplication by $u$) has a larger entry than the right child (multiplication by $v$) or vice versa. Another challenge is the case $u> v=1$, which requires special treatment using a polynomial partial ordering; the difficulty is due to a ``misalignment". Namely, the maximal entry in an odd row does not belong to a matrix that is the child of a matrix containing the maximal entry in the previous (even) row.

\begin{figure}[ht!]
\begin{center}
\begin{tikzpicture}[every tree node/.style={font=\small,anchor=base}, sibling distance=10pt]
\tikzset{level distance=55pt}
\Tree[.${\begin{bmatrix}f_1(uv) & f_2(uv) \\ f_3(uv) & f_4(uv)\end{bmatrix}}$ [.${\begin{bmatrix}f_1(uv) & f_2(uv) \\ uf_1(uv)+f_3(uv) & uf_2(uv)+f_4(uv)\end{bmatrix}}$
  ] [.${\begin{bmatrix}vf_3(uv)+f_1(uv) & vf_4(uv)+f_2(uv) \\ f_3(uv) & f_4(uv)\end{bmatrix}}$
 ]]
\end{tikzpicture}
\end{center}
\caption{Entries in the tree in terms of $u$ and $v$.}\label{fig:fun_uv}
\end{figure}

\subsection{Certain symmetric properties of PLFT $(u,v)$-Calkin-Wilf trees}

The symmetric features of $\mathcal{T}^{(u,v)}(I_2)$ have greatly aided our proof by simplifying the computations and reducing the number of cases that need to be considered. Let $n\geq 1$ and $i\in\{1,\ldots,2^n\}$.
We denote by $c_{I_2}^{(u,v)}(n,i)$ the $i^{\text{th}}$ element from left to right and $c_{I_2}^{(u,v)}(n,2^n+1-i)$ the $i^{\text{th}}$ element from right to left of depth $n$  in $\mathcal{T}^{(u,v)}(I_2)$. The elements $c_{I_2}^{(u,v)}(n,i)$ and $c_{I_2}^{(u',v')}(n,2^n+1-i)$ are said to be in \emph{symmetric positions}\footnote{Note that pairs of matrices in symmetric positions do not have to be vertices of the same tree.}.
 Let $\mathcal{L}^{(u,v)}=\mathcal{T}^{(u,v)}(L_u)$ and $\mathcal{R}^{(u,v)}=\mathcal{T}^{(u,v)}(R_v)$ represent the ``left side" and ``right side" of $\mathcal{T}^{(u,v)}(I_2)$.

The following properties are instrumental in our proof:

\begin{itemize}
\item Note that if $1\leq i\leq 2^{n-1}$, then $c_{I_2}^{(u,v)}(n,i)$ is a vertex in $\mathcal{L}^{(u,v)}$ and \\$c_{I_2}^{(u,v)}(n,2^n+1-i)$ is a vertex in $\mathcal{R}^{(u,v)}$. It is easy to see that if $$c_{I_2}^{(u,v)}(n,i)=\cdots R^{\alpha_4}_vL^{\alpha_3}_uR^{\alpha_2}_vL^{\alpha_1}_u,$$ then $$c_{I_2}^{(u',v')}(n,2^n+1-i)=\cdots L^{\alpha_4}_{u'}R^{\alpha_3}_{v'}L^{\alpha_2}_{u'}R^{\alpha_1}_{v'}$$ when $1\leq i\leq 2^{n-1}$.
\item If $c_{I_2}^{(u,v)}(n,i)= \begin{bmatrix} a & b\\ c & d \end{bmatrix}$, then
$c_{I_2}^{(v,u)}(n,2^n+1-i)= \begin{bmatrix} d & c\\ b & a \end{bmatrix}$ (Proposition~\ref{uvvu}). Hence, we may assume $u\geq v $ since, from the above symmetry, it follows that $\mu(\mathcal{T}^{(u,v)}(I_2;n))=\mu(\mathcal{T}^{(v,u)}(I_2;n))$. That is, the maximal entry among matrices of depth $n$ of $\mathcal{T}^{(u,v)}(I_2)$ is the same as that of $\mathcal{T}^{(v,u)}(I_2)$.

\item If $c_{I_2}^{(u,v)}(n,i)=\begin{bmatrix}a & b \\ c & d\end{bmatrix}$, then $c_{I_2}^{(u,v)}(n,2^n+1-i)=\begin{bmatrix}d & \frac{cv}{u} \\ \frac{bu}{v} & a\end{bmatrix}$ (Proposition~\ref{polyentry} and~\cite[Theorem 1]{HMST2}).  This symmetric property together with Proposition~\ref{leftdom} and the assumption that $u\geq v$ leads to the conclusion that the maximal entry occurs in a matrix on the left side of the tree.

\item Let $M=\begin{bmatrix}a & b \\ c & d\end{bmatrix}$ be a matrix in $\mathcal{T}^{(u,v)}(I_2)$. Then Lemma~\ref{domcol} shows that the maximal entry on the left side of the tree must appear in the first column of the matrix, that is, $\mu(M)=\max\{a,c\}$. Likewise the maximal entry on the right side of the tree must appear in the second column of the matrix, that is, $\mu(M)=\max\{b,d\}$. Given the assumption $u\geq v$, we focus our work on the first columns of the left side of the tree.

\end{itemize}

Proposition~\ref{ineq} provides an expression for the maximal entry for odd rows. Let $(L_uR_v)^nL_u=\begin{bmatrix} A_n & * \\C_n & *
    \end{bmatrix} \in \mathcal{T}^{(u,v)}(I_2;2n+1)$. If
$M=\begin{bmatrix} a & * \\ c & *
    \end{bmatrix}\in \mathcal{T}^{(u,v)}(I_2;2n+1)$, then by induction on $n$, we are able to show that $a,c\leq C_n$.  Moreover, we  provide an explicit formula for $C_n$ by solving a discrete dynamical system (Lemma~\ref{system} and  Proposition~\ref{alternating}).

For the even rows, we use a convenient argument that reduces the problem to the odd row case.  Namely, we consider the maximal entry of $\mathcal{T}^{(u,v)}(R_v;2n+1)$. The maximal entry has the form $\mu(\mathcal{T}^{(u,v)}(R_v;2n+1)) = \mu((R_vL_u)^{n+1})$ by Proposition~\ref{leftdom}. We then show that $\mu(\mathcal{T}^{(u,v)}(I_2;2n+2))=\mu(\mathcal{T}^{(u,v)}(R_v;2n+1))$.

\subsection{An interesting application in cryptography}

Another goal of this paper is to address a question raised in~\cite{B,BSV} related to Cayley hash functions. We provide an answer as  an application of Theorem~\ref{main}.

A hash function is a function that accepts data of arbitrary size as an input and produces an output of a fixed size. For example, the function $f:\mathbb{N}\to[0,m)$ given by $f(n)=n\pmod{m}$ always outputs a nonnegative integer that is no larger than $m-1$, regardless of the size of the input. This can be a useful tool in storing data (such as online passwords). This leads one to demand that a desirable hash function satisfy some basic requirements (as seen in~\cite{BSV}):

\begin{enumerate}
\item It should be computationally difficult to determine an input that hashes to a given output.
\item It should be computationally difficult to determine a second input that hashes to the same output as another given input.
\item It should be computationally difficult to determine two inputs that hash to the same output (referred to as collision resistance).
\end{enumerate}

In~\cite{BSV}, Bromberg, Shpilrain, and Vdovina define a Cayley hash function, which we refer to as the BSV hash\footnote{The BSV hash is a generalization of a hash function defined by Z\'emor~\cite{Z}.}, for binary strings in the following way.
Let $p$ be a large prime. For fixed integers $u,v\geq 1$ and a binary string $w=a_0a_1\cdots a_n$ where $a_i\in\{0,1\}$ for $i=0,\dots, n$, let $M=\prod_{i=0}^n f(a_i)$ where $f(0)=L_u$ and $f(1)=R_v$. (For the empty string $\lambda$, define $f(\lambda)=I_2$.) The hashed output, a matrix in $SL_2(\mathbb{F}_p)$, is obtained by reducing the entries of $M$ modulo $p$. For example, when $u=2$, $v=3$ and $p=5$, the hashed output of the string $01100$ is given by $\begin{bmatrix}0 & 1\\4 & 3\end{bmatrix}$.

There is a natural one-to-one correspondence between matrices in $\mathcal{T}^{(u,v)}(M;n)$ and bit strings of length $n$.  As such, if
one can show that $\mu(\mathcal{T}^{(u,v)}(M;n))$  is bounded above by some monotonically increasing function $f_{(u,v)}(n)$ for all $n$, then the conclusion is that, in the BSV hash,  all bit strings of length at most $n_0:=n_0(u,v)$ have distinct hashed values (i.e., there are no collisions for pairs of ``short" strings) where $n_0$ is the largest integer such that $f_{(u,v)}(n_0)<p$. In~\cite{BSV}, this is precisely what is done in the cases $u=v=2$ and $u=v=3$.

\medskip

\begin{proposition}[Bromberg, Shpilrain, and Vdovina~\cite{BSV}]\label{bsvext}
In the BSV hash there are no collisions between outputs of length less than
\[
\begin{cases}
\log_{\sqrt{3+\sqrt 8}}{p} & \text{when } u=v=2\text{ and}\\
\log_{\sqrt{\frac{11+\sqrt{117}}{2}}}{p} & \text{when } u=v=3.
\end{cases}
\]
\end{proposition}

\medskip

Corollary~\ref{hash}, a consequence of Theorem~\ref{main}, immediately allows us to extend this result to any $u,v \geq 1$. In particular, in the case where $u=2$ and $v=1$, we get that in the BSV hash there are no collisions between outputs of length less than $\log_{\sqrt{2+\sqrt 3}}{p}$ where $\sqrt{2+\sqrt 3}\approx 1.9,$ answering a question posed in~\cite[Problem 3]{BSV}.


The remainder of this paper is organized in the following way. Section 2 contains our main result. Section 3 is devoted to giving a thorough proof of the main result. The proof involves a careful analysis of various cases using a series of different techniques.


\section{Main results}\label{mt}

We now state our main results. First, we provide a monotonic closed formula for the maximal entry of any matrix of depth $n$ for any $u,v\geq 1$, denoted by $\mu(\mathcal{T}^{(u,v)}(I_2;n))$, depending on the parity of $n$. We also determine a matrix and position entry that contain the maximal value for every depth $n$. As a consequence, we obtain an upper bound on the binary string length that guarantees collision resistance on the BSV Cayley hash function for any $u,v\geq 1$.

\begin{theorem}\label{main}
For $n\geq 0$ and positive integers $u$ and $v$, let $s_{u,v}=\min\{u,v\}$, $t_{u,v}=\max\{u,v\}$, $p_{u,v}^\pm=\pm s_{u,v}\sqrt{t_{u,v}}+\sqrt{s_{u,v}(4+uv)}$ and $q_{u,v}^\pm=2+uv\pm\sqrt{uv(4+uv)}$. Then
\begin{align}
\mu(\mathcal{T}^{(u,v)}(I_2;2n+1)) &= \frac{\sqrt{t_{u,v}}\left((q_{u,v}^+)^{n+1}-(q_{u,v}^-)^{n+1}\right)}{2^{n+1}\sqrt{s_{u,v}(4+uv)}}\label{oddmax}
\end{align}
and
\begin{eqnarray}
\nonumber&&\mu(\mathcal{T}^{(u,v)}(I_2;2n+2))\\
&=& \begin{cases} \frac{p_{u,v}^+(q_{u,v}^+)^{n+1}+p_{u,v}^-(q_{u,v}^-)^{n+1}}{2^{n+2}\sqrt{s_{u,v}(4+uv)}} & \text{if $s_{u,v}>1$},\\
\frac{\sqrt{t_{u,v}}\left((\sqrt{t_{u,v}}p_{u,v}^-+2)(q_{u,v}^+)^{n+1}+(\sqrt{t_{u,v}}p_{u,v}^+-2)(q_{u,v}^-)^{n+1}\right)}{2^{n+2}\sqrt{(4+uv)}} & \text{otherwise}.\label{evenmax}
\end{cases}
\end{eqnarray}
Furthermore, the value given by~\eqref{oddmax} is attained by the $(2,1)$ entry of the matrix $(L_uR_v)^nL_u$ when $u\geq v$ and by the $(1,2)$ entry of the matrix $(R_vL_u)^nR_v$ when $v\geq u$. Similarly, the value given by~\eqref{evenmax} is attained by the $(1,1)$ entry of the matrix $(R_vL_u)^n$ when $u\geq v>1$, by the $(2,1)$ entry of the matrix $L_u(L_uR_v)^{n-1}L_u$ when $u\geq v=1$, by the $(2,2)$ entry of the matrix $(L_uR_v)^n$ when $v\geq u>1$, and by the $(1,2)$ entry of the matrix $R_v(R_vL_u)^{n-1}R_v$ when $v\geq u=1$.
\end{theorem}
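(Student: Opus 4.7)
The plan is to prove Theorem~\ref{main} in two stages: first, identify precisely which matrix at each depth attains the maximum entry $\mu(\mathcal{T}^{(u,v)}(I_2;n))$; then, using a Cayley--Hamilton-type recurrence satisfied by the relevant entry, derive the claimed closed form. Throughout I would exploit the symmetry $M \mapsto M^T$ combined with the interchange $(u,v)\leftrightarrow(v,u)$, which swaps the roles of $L_u$ and $R_v$ in the tree and lets one reduce to the case $u \geq v$ without loss of generality.

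For the identification stage, I would induct on $n$. Every matrix at depth $n+1$ has the form $L_uN$ or $R_vN$ for some $N = \begin{bmatrix} a & b \\ c & d \end{bmatrix} \in \mathcal{T}^{(u,v)}(I_2;n)$. Since $u,v \geq 1$ and all entries are nonnegative, $\mu(L_uN) = \max(ua+c,\,ub+d)$ and $\mu(R_vN) = \max(a+vc,\,b+vd)$. Thus computing the depth $n+1$ maximum reduces to optimizing a small number of linear functionals of depth $n$ matrices. The inductive hypothesis would record, for each of these functionals, which alternating word attains it and in which row/column. For $u \geq v \geq 2$ the argument closely parallels the case analysis of Bromberg--Shpilrain--Vdovina in~\cite{BSV}: the strictly alternating candidates $(L_uR_v)^nL_u$ and $(R_vL_u)^n$ dominate every competitor, and the inductive step reduces to the elementary observation that if the candidate has the entrywise-largest relevant row, then so does its image under $L_u$ or $R_v$.

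The case $s_{u,v} = 1$ is where the real work lies. When, say, $v = 1$, repeated application of $L_u$ grows certain entries faster than alternation with the weak generator $R_1$, so the even-depth optimum shifts to $L_u(L_uR_1)^{n-1}L_u$, and the optimal candidate is neither a strictly alternating word nor a pure power of a single generator. Handling this forces a multi-candidate induction in which several competing words are carried simultaneously through the recursion and compared entry-by-entry in the specific positions that will be amplified by the next $L_u$- or $R_v$-multiplication.

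For the closed-form stage, let $A := L_uR_v$. Then $\det A = 1$ and $\operatorname{tr} A = 2 + uv$, so Cayley--Hamilton yields $A^{n+1} = (2+uv)A^n - A^{n-1}$ with characteristic roots exactly $q_{u,v}^{\pm}/2$. Every distinguished entry appearing in~\eqref{oddmax}--\eqref{evenmax} satisfies the same scalar recurrence $x_{n+1} = (2+uv)x_n - x_{n-1}$ and is therefore of Binet form $\alpha (q_{u,v}^+)^n + \beta (q_{u,v}^-)^n$ up to a common factor of a power of $2$. Computing the two initial values in each case pins down $\alpha$ and $\beta$, and the resulting algebra produces exactly the prefactors $\sqrt{t_{u,v}}$, $p_{u,v}^{\pm}$, and $\sqrt{s_{u,v}(4+uv)}$ in the stated formulas; the bifurcation of~\eqref{evenmax} according to whether $s_{u,v}>1$ or $s_{u,v}=1$ reflects exactly the switch of optimal word discussed above.

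The main obstacle, and the step on which Section~3 will clearly need to spend most of its energy, is the induction in the $s_{u,v}=1$ subcase. Here the candidate optimum is a mixed word that cannot be derived by any one-step "apply the dominant generator" rule, so one must compare $L_u(L_uR_1)^{n-1}L_u$ (and its transpose) against \emph{all} other depth-$n$ words and verify it dominates entrywise in the positions that matter for the next step. This demands a strengthened inductive hypothesis tracking multiple candidates at once, together with a careful case-by-case comparison of how $L_u$- versus $R_1$-multiplication redistributes mass among the rows.
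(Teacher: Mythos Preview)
Your overall architecture matches the paper's: reduce to $u \geq v$ by the transpose/swap symmetry, handle odd depth first via a strengthened induction on the left-column entries, derive even depth from odd, and extract the Binet-type formulas from the eigenstructure of $L_uR_v$. For $u \geq v \geq 2$ your sketch is essentially the paper's Proposition~\ref{ineq}, and your closed-form derivation via Cayley--Hamilton is equivalent to the eigenvector computation in Lemma~\ref{system}.

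The gap is in the $v=1$ case. You correctly flag it as the hard part and propose a ``multi-candidate induction'' with ``entry-by-entry'' comparison, but this does not close the argument as stated. The specific failure is that when $v=1$ the inequality $2u \leq 1+uv$ breaks, and with it the numerical induction hypothesis (bounding $\max\{a,c\}$ and $a+c$) that drives Proposition~\ref{ineq}; simply tracking more candidate words and more numerical functionals does not repair this, because the needed cross-inequalities between candidates (e.g.\ that the $L_u^2$-child of a $u$-LD word is dominated by the $L_uR_1$-child of the best word) are not forced by the functionals you name. The paper's resolution is a different idea you do not mention: every left-column entry in $\mathcal{T}^{(u,1)}(I_2)$ is a polynomial in $u$ over $\mathbb{N}_0$ (Lemma~\ref{polyrep}), and one introduces a partial order $\succcurlyeq$ on such polynomials via tail-sum dominance of coefficients (Definition~\ref{def}). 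The induction is then run in this partial order rather than numerically; $\succcurlyeq$ is preserved by the relevant transitions and implies the desired inequality upon evaluation at any $u\geq 1$ (Lemma~\ref{domineq}). Two candidate families, $(L_uR_1)^nL_u$ and $(R_1L_u)^nL_u$, are carried with explicit binomial-coefficient formulas (Lemmas~\ref{FnGn}, \ref{HnIn}), and the cross-comparisons between them (Propositions~\ref{sim}, \ref{sim2}) are established by combinatorial identities---this is where most of Section~3's effort goes. Your outline identifies the right obstacle but does not supply this mechanism or an alternative.
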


For concrete examples of Theorem~\ref{main}, see Tables~\ref{tab:odduvtable} and~\ref{tab:evenuvtable}.

\begin{table}[ht!]
\centering
\begin{tabular}{|c||*{2}{c|}}\hline
\backslashbox{$u$ \kern-2em}{$v$}
&1&2\\\hline\hline
1 &$\frac{(3+\sqrt{5})^{n+1}-(3-\sqrt{5})^{n+1}}{2^{n+1}\sqrt{5}}$ & $\frac{(2+\sqrt{3})^{n+1}-(2-\sqrt{3})^{n+1}}{\sqrt{3}}$\\\hline
2 &$\frac{(2+\sqrt{3})^{n+1}-(2-\sqrt{3})^{n+1}}{\sqrt{3}}$&$\frac{(3+2\sqrt{2})^{n+1}-(3-2\sqrt{2})^{n+1}}{2\sqrt{2}}$\\\hline
3 &$\frac{3((5+\sqrt{21})^{n+1}-(5-\sqrt{21})^{n+1})}{2^{n+1}\sqrt{21}}$&$\frac{3((4+\sqrt{15})^{n+1}-(4-\sqrt{15})^{n+1})}{2\sqrt{15}}$\\\hline
\end{tabular}
\caption{The value of $\mu(\mathcal{T}^{(u,v)}(I_2;2n+1))$ for various choices of $u$ and $v$.}\label{tab:odduvtable}
\end{table}

\begin{table}[ht!]
\centering
\begin{tabular}{|c||*{1}{c|}}\hline
\backslashbox{$u$ \kern-2em}{$v$}
&1\\\hline\hline
1 &$\frac{(\sqrt{5}+1)(3+\sqrt{5})^{n+1}+(\sqrt{5}-1)(3-\sqrt{5})^{n+1}}{2^{n+2}\sqrt{5}}$\\\hline
2 &$(2+\sqrt{3})^{n+1}+(2-\sqrt{3})^{n+1}$\\\hline
3 & $\frac{3((\sqrt{21}-1)(5+\sqrt{21})^{n+1}+(\sqrt{21}+1)(5-\sqrt{21})^{n+1})}{2^{n+2}\sqrt{21}}$\\\hline
\end{tabular}
\caption{The value of $\mu(\mathcal{T}^{(u,v)}(I_2;2n+2))$ for various choices\protect\footnotemark of $u$ and $v$.}\label{tab:evenuvtable}
\end{table}

\footnotetext{We restrict the number of entries in this table due to space considerations.}

The discussion in the introduction, together with Theorem~\ref{main}, immediately gives the following result.

\begin{corollary}\label{hash}
Let $u,v\geq 1$ and $n_0:=n_0(u,v)$ be the largest integer such that $\mu(\mathcal{T}^{(u,v)}(I_2;n_0))< p$. Then there are no collisions between distinct bit strings of length $\leq n_0$ in the BSV hash.
\end{corollary}

When $u=v\in\{2,3\}$, our results in Theorem~\ref{main} and Corollary~\ref{hash} coincide with the ones obtained by Bromberg, Shpilrain, and Vdovina in~\cite{BSV}.


\section{Proof of Theorem~\ref{main}}

For the remainder of the paper, since we are concentrating on a proof of Theorem~\ref{main}, which involves the  $\mathcal{T}^{(u,v)}(I_2)$ tree, we will focus our attention only on matrices in $SL_2(\mathbb{N}_0)$.

In Theorem~\ref{main}, the claim is that, when $u\geq v$, $(L_uR_v)^nL_u$ has the maximal entry among all other matrices in $\mathcal{T}^{(u,v)}(I_2;2n+1)$. We first show that the left column entries of matrices of this form can be easily computed using a discrete dynamical system.

\begin{lemma}\label{system}
Let $u,v\in\mathbb{N}$ and $a,c\in\mathbb{N}_0$ (not both zero). Define $\alpha_n:=\alpha_n^{(u,v)}(a,c)$ and $\gamma_n:=\gamma_n^{(u,v)}(a,c)$ recursively by
\begin{align*}
\alpha_n &=\begin{cases}a & \text{ for $n=0$,}\\
\alpha_{n-1}+v\gamma_{n-1} & \text{ otherwise}
\end{cases}
\end{align*}
and
\begin{align*}
\gamma_n &=\begin{cases}ua+c & \text{ for $n=0$,}\\
u\alpha_{n-1}+(1+uv)\gamma_{n-1} & \text{ otherwise.}
\end{cases}
\end{align*}
Then $\gamma_n\geq \alpha_n$,
\begin{align*}
\gamma_n &= \frac{(cp_{u,v}^++aq_{u,v}^+\sqrt{u})(q_{u,v}^+)^n+(cp_{u,v}^--aq_{u,v}^-\sqrt{u})(q_{u,v}^-)^n}{2^{n+1}\sqrt{v(4+uv)}},\\
\end{align*}
and
\begin{align*}
\alpha_n &= \frac{(cp_{u,v}^++aq_{u,v}^+\sqrt{u})(q_{u,v}^+)^np_{u,v}^--(cp_{u,v}^--aq_{u,v}^-\sqrt{u})(q_{u,v}^-)^np_{u,v}^+}{2^{n+2}\sqrt{uv(4+uv)}}\\
\end{align*}
where $p_{u,v}^\pm=\pm v\sqrt{u}+\sqrt{v(4+uv)}$ and $q_{u,v}^\pm=2+uv\pm\sqrt{uv(4+uv)}$.
\end{lemma}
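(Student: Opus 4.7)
The plan is to linearize the system and then solve the resulting second-order scalar recurrences. Writing the recursions for $n\geq 1$ in matrix form yields
\[ \begin{bmatrix}\alpha_n\\ \gamma_n\end{bmatrix}=M\begin{bmatrix}\alpha_{n-1}\\ \gamma_{n-1}\end{bmatrix},\qquad M=\begin{bmatrix}1 & v\\ u & 1+uv\end{bmatrix}=L_uR_v, \]
so iterating against the initial vector $\begin{bmatrix}a\\ ua+c\end{bmatrix}$ produces both sequences from powers of $M$. Since $\det M=1$ and $\mathrm{tr}\,M=2+uv$, Cayley--Hamilton gives $M^2=(2+uv)M-I$, and hence each of $\alpha_n$ and $\gamma_n$ satisfies the same scalar linear recurrence
\[ x_{n+1}=(2+uv)\,x_n - x_{n-1}\qquad (n\geq 1). \]

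Its characteristic polynomial $\lambda^2-(2+uv)\lambda+1$ has two distinct real roots $\lambda_\pm=q_{u,v}^\pm/2$ (distinct and real because $uv(4+uv)>0$), so every solution takes the form $A\lambda_+^n+B\lambda_-^n$. The constants $A,B$ attached to each sequence are pinned down by the two initial values $\alpha_0=a$, $\alpha_1=(1+uv)a+vc$, $\gamma_0=ua+c$, and $\gamma_1=u(2+uv)a+(1+uv)c$, read directly off the defining recurrences. The bulk of the work is then a routine but tedious verification that the proposed closed forms coincide with $A\lambda_+^n+B\lambda_-^n$. This is streamlined by testing only $n=0,1$ and repeatedly invoking the identities
\[ q_{u,v}^+ + q_{u,v}^- = 2(2+uv),\quad q_{u,v}^+ q_{u,v}^- = 4,\quad q_{u,v}^+ - q_{u,v}^- = 2\sqrt{uv(4+uv)}, \]
\[ p_{u,v}^+ + p_{u,v}^- = 2\sqrt{v(4+uv)},\quad p_{u,v}^+ p_{u,v}^- = 4v, \]
which collapse the nested radicals coming from $p_{u,v}^\pm$ and $q_{u,v}^\pm$. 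This algebraic reconciliation is the main obstacle, but it involves no new ideas beyond careful bookkeeping.

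Finally, the inequality $\gamma_n\geq\alpha_n$ is established separately by a short induction on $n$ that does not use the closed forms. The base case is immediate: $\gamma_0-\alpha_0=(u-1)a+c\geq 0$ since $u\geq 1$ and $a,c\geq 0$. For the inductive step, subtracting the two recurrences gives
\[ \gamma_n-\alpha_n=(u-1)\alpha_{n-1}+\bigl(1+v(u-1)\bigr)\gamma_{n-1}, \]
which is nonnegative as long as $\alpha_{n-1},\gamma_{n-1}\geq 0$. Nonnegativity of $\alpha_{n-1}$ is preserved along the induction because $\alpha_n=\alpha_{n-1}+v\gamma_{n-1}$ is a nonnegative combination of nonnegative quantities, so the induction closes and $\gamma_n\geq\alpha_n$ holds for all $n\geq 0$.
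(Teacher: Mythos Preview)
Your proposal is correct and follows essentially the same route as the paper: both write the recursion as iteration of $M=\begin{bmatrix}1 & v\\ u & 1+uv\end{bmatrix}$ and solve via its eigenvalues $\lambda_\pm=q_{u,v}^\pm/2$, the only cosmetic difference being that the paper decomposes the initial vector along the eigenvectors directly while you pass through Cayley--Hamilton to a scalar recurrence before matching initial data. For the inequality the paper uses the slightly slicker identity $\gamma_n=u\alpha_n+\gamma_{n-1}$ (so $\gamma_n\geq u\alpha_n\geq\alpha_n$), but your direct subtraction works just as well.
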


\begin{proof}
It is clear that $\gamma_0\geq \alpha_0$.  The fact that $\gamma_n\geq \alpha_n$ for $n\geq 1$ follows from noticing that $\gamma_n=u\alpha_n+\gamma_{n-1}$.

As a matrix equation, we have that, for $n\geq 1$,
\begin{align*}
\begin{bmatrix}\alpha_n\\ \gamma_n\end{bmatrix} &= \begin{bmatrix}1 & v\\ u & 1+uv\end{bmatrix} \begin{bmatrix}\alpha_{n-1}\\ \gamma_{n-1}\end{bmatrix}.
\end{align*}
The eigenvalues of the matrix $\begin{bmatrix}1 & v\\ u & 1+uv\end{bmatrix}$ are
$$\lambda_1=\frac{1}{2}\left(2+uv+\sqrt{uv(4+uv)}\right)\quad \text{ and }\quad \lambda_2=\frac{1}{2}\left(2+uv-\sqrt{uv(4+uv)}\right)$$ with associated eigenvectors $\vec{v}_1=\begin{bmatrix}\frac{\sqrt{v(4+uv)}-v\sqrt{u}}{2\sqrt{u}}\\1\end{bmatrix}$ and $\vec{v}_2=\begin{bmatrix}\frac{-\sqrt{v(4+uv)}-v\sqrt{u}}{2\sqrt{u}}\\1\end{bmatrix}$, respectively. Solving the vector equation \begin{align*}
\begin{bmatrix}\alpha_0\\ \gamma_0\end{bmatrix} = c_1\vec{v}_1+c_2\vec{v}_2
\end{align*}
gives that
\begin{align*}
c_1 &= \frac{c(v\sqrt{u}+\sqrt{v(4+uv)})+a\sqrt{u}(2+uv+\sqrt{uv(4+uv)})}{2\sqrt{v(4+uv)}}
\end{align*}
and
\begin{align*}
c_2 &=\frac{c(-v\sqrt{u}+\sqrt{v(4+uv)})-a\sqrt{u}(2+uv-\sqrt{uv(4+uv)})}{2\sqrt{v(4+uv)}}.
\end{align*}
It follows that
\begin{align*}
\begin{bmatrix}\alpha_n\\ \gamma_n\end{bmatrix} &= \begin{bmatrix}1 & v\\ u & 1+uv\end{bmatrix}^n \begin{bmatrix}\alpha_0\\ \gamma_0\end{bmatrix}\\
&= \begin{bmatrix}1 & v\\ u & 1+uv\end{bmatrix}^n(c_1\vec{v}_1+c_2\vec{v}_2)\\
&= c_1\lambda_1^n\vec{v}_1+c_2\lambda_2^n\vec{v}_2.
\end{align*}
So
$
\gamma_n = c_1\lambda_1^n+c_2\lambda_2^n,
$
which gives the desired result after the appropriate substitutions.
\end{proof}

\begin{proposition}\label{alternating}
Suppose that $M\in SL_2(\mathbb{N}_0)$ is given by $M=\begin{bmatrix} a & b\\ c & d\end{bmatrix}$.
For any $n\geq 0$, let
\begin{align*}
(L_uR_v)^nL_uM &= \begin{bmatrix} A_n & \ast\\ C_n & \ast\end{bmatrix}.
\end{align*}
Then $A_n=\alpha_n$ and $C_n=\gamma_n$ where $\alpha_n$ and $\gamma_n$ are as defined in Lemma~\ref{system}.
\end{proposition}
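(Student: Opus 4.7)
The plan is to proceed by induction on $n$, leveraging the observation that the transition matrix of the dynamical system in Lemma~\ref{system} is exactly $L_uR_v = \begin{bmatrix} 1 & v \\ u & 1+uv\end{bmatrix}$. This identification makes the proposition essentially a tautology in disguise, so the work reduces to verifying the base case and recognizing the recursion in the inductive step.

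First, I would handle the base case $n=0$ by computing $L_uM$ directly: the first column of $L_uM$ is $\begin{bmatrix} a \\ ua+c \end{bmatrix}$, which matches the initial values $\alpha_0 = a$ and $\gamma_0 = ua+c$ specified in Lemma~\ref{system}.

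For the inductive step, assuming $A_n = \alpha_n$ and $C_n = \gamma_n$, I would write $(L_uR_v)^{n+1}L_uM = (L_uR_v)\cdot\bigl((L_uR_v)^nL_uM\bigr)$ and read off the first column of this product as $\begin{bmatrix} 1 & v \\ u & 1+uv\end{bmatrix}\begin{bmatrix} \alpha_n \\ \gamma_n\end{bmatrix} = \begin{bmatrix} \alpha_n + v\gamma_n \\ u\alpha_n + (1+uv)\gamma_n\end{bmatrix}$, which is exactly $\begin{bmatrix} \alpha_{n+1} \\ \gamma_{n+1}\end{bmatrix}$ by the recursive definition, closing the induction.

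There isn't really a hard step here; any difficulty was front-loaded into stating Lemma~\ref{system} with the correct transition matrix and initial data. Once that system is in hand, the proposition merely records the fact that iterating left-multiplication by $L_uR_v$ on the first column of $L_uM$ reproduces the sequence $(\alpha_n,\gamma_n)$ by construction. The entries $b, d$ of $M$ and the right column of the product play no role in either the statement or the argument, so I would not need to track them.
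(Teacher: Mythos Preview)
Your proposal is correct and is exactly the approach the paper takes: the paper's proof is the single sentence ``The result follows by noting the relationship between the left columns of $(L_uR_v)^nL_uM$ and $(L_uR_v)^{n+1}L_uM$,'' and you have simply written that relationship out in full, including the base case and the identification $L_uR_v=\begin{bmatrix}1 & v\\ u & 1+uv\end{bmatrix}$.
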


\begin{proof}
The result follows by noting the relationship between the left columns of $(L_uR_v)^nL_uM$ and $(L_uR_v)^{n+1}L_uM$.
\end{proof}

Note that a result similar to Proposition~\ref{alternating} could easily be found for the right column of $(L_uR_v)^nL_uM$. However, as we will see later on, this is not necessary. The symmetries associated with PLFT $(u,v)$-Calkin-Wilf trees will allow us to reduce the number of cases to be analyzed.

With Proposition~\ref{alternating} applied to $I_2$, we can compute the entries in the left column of a specific family of matrices, namely matrices of the form $(L_uR_v)^nL_u$. The next step will be to show that the left column entries of any matrix of depth $2n+1$ are no larger than $C_n$.

\begin{definition}\label{uplow}
Let $M\in SL_2(\mathbb{N}_0)$ be given by $M=\begin{bmatrix} a & b\\ c & d\end{bmatrix}$. We say that $M$ is {\emph{$u$-lower dominant ($u$-LD)}} if $c\geq ua$ and $d\geq ub$ and we say that $M$ is {\emph{$v$-upper dominant ($v$-UD)}} if $a\geq vc$ and $b\geq vd$.
\end{definition}

We get the following immediate consequences of the definitions of $u$-LD and $v$-UD.

\begin{lemma}\label{uld1}
A matrix in $SL_2(\mathbb{N}_0)$ is $u$-LD ($v$-UD) if and only if it is of the form $L_uM$ ($R_vM$) for some $M\in SL_2(\mathbb{N}_0)$.
\end{lemma}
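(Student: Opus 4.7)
The plan is to prove each biconditional by direct matrix multiplication, using the fact that $L_u$ and $R_v$ are invertible in $SL_2(\mathbb{Z})$ with easy explicit inverses.

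For the $u$-LD statement, I would first handle the forward implication by writing $M=\begin{bmatrix} a & b\\ c & d\end{bmatrix}\in SL_2(\mathbb{N}_0)$ and computing
\begin{align*}
L_uM=\begin{bmatrix} a & b\\ ua+c & ub+d\end{bmatrix},
\end{align*}
which is manifestly $u$-LD since $c,d\geq 0$ give $ua+c\geq ua$ and $ub+d\geq ub$. For the converse, suppose $N=\begin{bmatrix} a' & b'\\ c' & d'\end{bmatrix}\in SL_2(\mathbb{N}_0)$ is $u$-LD. The natural candidate for the desired $M$ is $M=L_u^{-1}N$. Using $L_u^{-1}=\begin{bmatrix} 1 & 0\\ -u & 1\end{bmatrix}$ we compute
\begin{align*}
M=\begin{bmatrix} a' & b'\\ c'-ua' & d'-ub'\end{bmatrix}.
\end{align*}
The $u$-LD hypotheses $c'\geq ua'$ and $d'\geq ub'$ ensure the two lower entries are non-negative, and since the top row of $M$ equals the top row of $N$, all entries are in $\mathbb{N}_0$. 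Finally $\det M=\det(L_u^{-1})\det N=1$, so $M\in SL_2(\mathbb{N}_0)$ as required.

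The $v$-UD case is completely analogous: forward, one computes
\begin{align*}
R_vM=\begin{bmatrix} a+vc & b+vd\\ c & d\end{bmatrix},
\end{align*}
which is $v$-UD. For the converse, one sets $M=R_v^{-1}N=\begin{bmatrix} a'-vc' & b'-vd'\\ c' & d'\end{bmatrix}$ and invokes the $v$-UD inequalities $a'\geq vc'$, $b'\geq vd'$ to see that $M\in SL_2(\mathbb{N}_0)$.

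There is essentially no obstacle here; the only thing to verify carefully is that the candidate preimages $L_u^{-1}N$ and $R_v^{-1}N$ land in $SL_2(\mathbb{N}_0)$, and this is exactly what the $u$-LD and $v$-UD conditions are engineered to guarantee. The determinant is automatic, so the whole content of the lemma is the non-negativity of the entries of $L_u^{-1}N$ and $R_v^{-1}N$, which is immediate from the definitions.
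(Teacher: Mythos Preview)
Your proof is correct and follows the same approach as the paper --- direct matrix computation --- for the forward direction. The paper's own proof actually only writes out the forward implication for the $u$-LD case and then says ``The remaining part of the proof is similar,'' so your treatment is in fact more complete: you explicitly verify the converse by exhibiting $M=L_u^{-1}N$ (resp.\ $M=R_v^{-1}N$) and checking non-negativity of its entries, which is indeed the substance of that direction.
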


\begin{proof}
Let $M=\begin{bmatrix} a & b\\ c & d\end{bmatrix}$. We have that $L_uM=\begin{bmatrix} a & b\\ ua+c & ub+d\end{bmatrix}$. Clearly we have that $ua+c\geq ua$ and $ub+d\geq ub$, which give the needed inequalities. The remaining part of the proof is similar.
\end{proof}

\begin{lemma}\label{allUDLD}
Suppose that $M\in SL_2(\mathbb{N}_0)$ and let $M'\in\mathcal{T}^{(u,v)}(M;n)$ for some $n>0$. Then $M'$ is either $u$-LD or $v$-UD.
\end{lemma}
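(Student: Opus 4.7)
The plan is to recognize that this lemma is essentially an immediate repackaging of Lemma~\ref{uld1} together with the construction rules (2) and (3) of the PLFT $(u,v)$-Calkin-Wilf tree. Since $n > 0$, the vertex labeled $M'$ is not the root, so it has a unique parent vertex $M'' \in \mathcal{T}^{(u,v)}(M;n-1)$, and $M'$ is either the left child or the right child of $M''$.

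By rule (2), if $M'$ is the left child of $M''$, then a direct comparison of the tree-label formula with the product
\[
L_uM'' = \begin{bmatrix}1 & 0\\u & 1\end{bmatrix}\begin{bmatrix}a & b\\c & d\end{bmatrix} = \begin{bmatrix}a & b\\ua+c & ub+d\end{bmatrix}
\]
(writing $M'' = \begin{bmatrix}a & b\\c & d\end{bmatrix}$) shows $M' = L_uM''$. Analogously, by rule (3), if $M'$ is the right child of $M''$, then $M' = R_vM''$. Applying Lemma~\ref{uld1} to each case, we conclude that $M'$ is $u$-LD in the former case and $v$-UD in the latter.

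The only minor point to address is that Lemma~\ref{uld1} requires $M'' \in SL_2(\mathbb{N}_0)$; this follows either from the blanket restriction made at the start of Section~3, or from a one-line induction noting that the root $M \in SL_2(\mathbb{N}_0)$ by hypothesis and that $SL_2(\mathbb{N}_0)$ is closed under left-multiplication by $L_u$ and $R_v$. There is no substantive obstacle here, so I expect the write-up to be only a few lines.
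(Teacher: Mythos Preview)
Your proposal is correct and follows essentially the same approach as the paper: identify the parent $M''$ of $M'$ at depth $n-1$, observe that $M'\in\{L_uM'',R_vM''\}$ by the tree construction rules, and invoke Lemma~\ref{uld1}. The paper's proof is nothing more than this two-line observation, so your write-up (with the extra matrix computation and the remark about $M''\in SL_2(\mathbb{N}_0)$) is simply a slightly more detailed version of the same argument.
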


\begin{proof}
If $M'\in\mathcal{T}^{(u,v)}(M;n)$, then either $M'=L_uM''$ or $M'=R_vM''$ for some $M''\in\mathcal{T}^{(u,v)}(M;n-1)$. By Lemma~\ref{uld1}, the result follows.
\end{proof}

At this time we consider two separate cases. In the first case we assume that $u\geq v\geq 2$ and in the second that $u\geq v=1$. The proof of the first case is fairly straightforward and mimics many of the parts in the Bromberg, Shpilrain, and Vdovina proof~\cite{BSV}. The second case is more involved and requires a somewhat different approach.

\begin{proposition}\label{ineq}
Let $u\geq v\geq 2$. Suppose that $M,M'\in SL_2(\mathbb{N}_0)$, given by $M=\begin{bmatrix} a & \ast\\ c & \ast\end{bmatrix}$ and $M'=\begin{bmatrix} a' & \ast\\ c' & \ast\end{bmatrix}$, are such $M'\in \mathcal{T}^{(u,v)}(M,2n+1)$ and $a\geq c$. Then $\min\{a',c'\}\leq A_n$ and $\max\{a',c'\}\leq C_n$, where $A_n$ and $C_n$ are as defined in Proposition~\ref{alternating}. Furthermore, when $M'$ is $v$-UD, $a'\leq\frac{v}{u}C_n$.
\end{proposition}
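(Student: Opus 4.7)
The plan is to induct on $n$. For the base case $n = 0$, we verify both bounds directly on $M' \in \{L_u M, R_v M\}$: when $M' = L_u M$, the left column is $(A_0, C_0)$ with equality in both inequalities; when $M' = R_v M$, the left column is $(a + vc, c)$, and the hypotheses $a \geq c$ and $u \geq v$ give $a + vc \leq ua + c = C_0$ and $(a + vc) + c \leq (1+u)a + c = A_0 + C_0$.

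For the inductive step, I consider $M''' \in \mathcal{T}^{(u,v)}(M, 2n+3)$ and write $M''' = XY M'$ for some $M' \in \mathcal{T}^{(u,v)}(M, 2n+1)$ and some $X,Y \in \{L_u, R_v\}$. The four choices give explicit linear formulas for the left column $(a''', c''')$ in terms of $(a', c')$; for instance $M''' = L_u R_v M'$ yields $a''' = a' + vc'$ and $c''' = ua' + (1+uv) c'$. Using the two-step recurrences $A_{n+1} = A_n + v C_n$ and $C_{n+1} = u A_n + (1 + uv) C_n$ from Lemma~\ref{system}, together with the IH $\max\{a', c'\} \leq C_n$ and $a' + c' \leq A_n + C_n$, I expect most cases to fall out by a direct decomposition. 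The cleanest identity is $u a' + (1 + uv) c' = u(a' + c') + (1 + u(v-1)) c'$, which lets me combine both IH bounds to pin down $C_{n+1}$ exactly; an analogous identity handles the LR-sum. The cases $M''' = L_u^2 M'$ and $M''' = R_v^2 M'$ use only the max bound plus the fact that quantities like $(2-v) C_n$ and $v(2-u) C_n$ are non-positive once $u, v \geq 2$.

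The main obstacle is the sum bound in the case $M''' = R_v L_u M'$, where one needs $(1 + u + uv) a' + (1+v) c' \leq (1+u) A_n + (1+v+uv) C_n$. A brute-force bound using only $a', c' \leq C_n$ and $a' + c' \leq A_n + C_n$ overshoots the target by exactly $(u-v)(C_n - A_n) \geq 0$, so the inductive hypothesis alone is not enough. To close this gap I plan to exploit the dichotomy from Lemma~\ref{allUDLD}: if $M'$ is $u$-LD, then $c' \geq u a' \geq a'$, and the algebraic identity
\[
\bigl((1+u+uv) a' + (1+v) c'\bigr) - \bigl((1+u) a' + (1+v+uv) c'\bigr) = uv(a' - c') \leq 0
\]
reduces the RL-sum to the LR-sum, which has already been controlled. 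If $M'$ is $v$-UD, I will write $M' = R_v M''$ for some $M'' \in \mathcal{T}^{(u,v)}(M, 2n)$, unfold the RL-sum into a linear combination of the entries of $M''$, and bound it using companion estimates on even-depth entries (which I anticipate proving by a parallel induction, in the spirit of the recurrences $\alpha'_n = A_n$ and $\gamma'_n = C_{n-1}$ for the trajectory $(R_v L_u)^n M$). This $v$-UD subcase is where $v \geq 2$ is essential; the case $v = 1$ is explicitly excluded from this proposition and is handled by the separate approach discussed for $u \geq v = 1$ later in the paper.
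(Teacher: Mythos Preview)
Your strategy is the paper's: induct on $n$, unfold two steps, and split on whether the depth-$(2n+1)$ matrix is $u$-LD or $v$-UD. Your base case and the $u$-LD branch match the paper essentially line for line, including the key decomposition $ua' + (1+uv)c' = u(a'+c') + \bigl(1+u(v-1)\bigr)c'$.

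You are also right that the sum bound for $R_vL_uM'$ with $M'$ $v$-UD is the sticking point, and your overshoot $(u-v)(C_n-A_n)$ is exactly what the linear relaxation of the hypotheses gives: the point $(a',c') = (C_n,A_n)$ satisfies $a' \leq C_n$, $a'+c' \leq A_n+C_n$, and $vc' \leq a'$ (since $C_n \geq vA_n$ when $u\geq v$), yet $(1+u+uv)C_n + (1+v)A_n$ exceeds $A_{n+1}+C_{n+1}$ by precisely $(u-v)(C_n-A_n)$. The paper is terse at this same spot, asserting only that the needed inequalities ``follow from the fact that $c''\leq a''$ and $v\leq u$''; these facts do close the max bound (via $v(a''+c'') + (1+uv-v)a'' \leq vA_k + (1+uv)C_k \leq uA_k + (1+uv)C_k$) and three of the four sum cases, but not the $R_vL_u$ sum. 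So you have put your finger on a point the paper glosses over.

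Your remedy, however, is only a sketch. Unfolding $M' = R_v M''$ and appealing to ``companion estimates on even-depth entries'' to be established by an unstated ``parallel induction'' is not yet an argument: you have not said what those companion invariants are, nor verified that both sets of bounds propagate under all four two-step moves. The natural repair is to carry a strengthened hypothesis with separate, tighter bounds for $v$-UD matrices at each odd depth --- exactly the device the paper deploys in the $v=1$ case via the polynomials $H_n(x),\,I_n(x)$ of Lemma~\ref{HnIn} --- and then check the analogues of Proposition~\ref{sim} and Proposition~\ref{sim2}. Until that is actually carried out, the proposal shares the gap it diagnoses.
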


\begin{proof}
For $n=0$, notice that $L_uM=\begin{bmatrix} a & \ast\\ ua+c & \ast\end{bmatrix}$ and $R_vM=\begin{bmatrix} a+vc & \ast\\ c & \ast\end{bmatrix}$ are the only two matrices in $\mathcal{T}^{(u,v)}(M;1)$. Since $(v-1)c\leq (u-1)a$, the result holds in this case.

Suppose that the statement is true for all matrices of depth $2k+1$, for some $k\geq 0$. Let $M'\in\mathcal{T}^{(u,v)}(M,2k+3)$. Then $M'\in\mathcal{T}^{(u,v)}(M'',2)$ for some $M''\in\mathcal{T}^{(u,v)}(M,2k+1)$ given by $M''=\begin{bmatrix} a'' & \ast\\ c'' & \ast\end{bmatrix}$. It must be the case that $$M'\in\{L_u^2M'', L_uR_vM'', R_vL_uM'', R_v^2M''\}.$$ In particular,
\begin{align*}
M'=\begin{cases}
\begin{bmatrix} a'' & \ast\\ 2ua''+c'' & \ast\end{bmatrix} & \text{ if $M'=L_u^2M''$,}\\[3ex]
\begin{bmatrix} a''+vc'' & \ast\\ ua''+(1+uv)c'' & \ast\end{bmatrix} & \text{ if $M'=L_uR_vM''$,}\\[3ex]
\begin{bmatrix} (1+uv)a''+vc'' & \ast\\ ua''+c'' & \ast\end{bmatrix} & \text{ if $M'=R_vL_uM''$,}\\[3ex]
\begin{bmatrix} a''+2vc'' & \ast\\ c'' & \ast\end{bmatrix} & \text{ if $M'=R_v^2M''$.}
\end{cases}
\end{align*}
If $M''$ is $u$-LD, then $ua''\leq c''$, so
\begin{align*}
2ua''+c'' &= ua''+ua''+c''\\
&\leq ua''+2c''\\
&\leq ua''+(1+uv)c''.
\end{align*}
We have that
\begin{align*}
(1+uv)a''+vc'' &= a''+uva''+vc''\\
&\leq a''+2vc''.
\end{align*}
Finally, it follows that $2v\leq 1+uv$ since $u\geq 2$, so $a''+2vc'' \leq ua''+(1+uv)c''$.
These inequalities show that $\max\{a',c'\} \leq ua''+(1+uv)c''.$

It is easy to see that $\min\{a',c'\}$ is at most either $a''+vc''$ or $ua''+c''$.

Since, by assumption, $a''\leq A_k$ and $c''\leq C_k$, it follows that
\begin{align*}
ua''+(1+uv)c'' &\leq uA_k+(1+uv)C_k\\
&= C_{k+1},
\end{align*}
\begin{align*}
a''+vc'' &\leq A_k+vC_k\\
&= A_{k+1},
\end{align*}
and
\begin{align*}
ua''+c'' &\leq 2c''\\
&\leq A_k+vC_k\\
&= A_{k+1},
\end{align*}
since $v\geq 2$, as desired.

If $M''$ is $v$-UD, then one can show that $\max\{a',c'\}\leq (1+uv)a''+vc''$ using a very similar set of arguments as above. The needed inequalities follow from the fact that $vc''\leq a''$ and $u\geq v\geq 2$ in this case. This completes the proof that $\min\{a',c'\}\leq A_n$ and $\max\{a',c'\}\leq C_n$.

For the remainder of the proof, we assume that $M'$ is $v$-UD. If $M''$ is $u$-LD, then
\begin{align*}
    u((1+uv)a''+vc'') &= u(a''+uva''+vc'')\\
    &\leq u(a''+2vc'')\\
    &\leq ua''+uv^2c''\\
    &\leq v(uA_k+(1+uv)C_k)\\
    &= vC_{k+1}.
\end{align*}
Similarly, $M''$ is $v$-UD, then
\begin{align*}
    u(a''+2vc'') &\leq u(2a''+vc'')\\
    &\leq u((1+uv)a''+vc'')\\
    &\leq u((1+uv)\frac{v}{u}C_k+vA_k)\\
    &= v(uA_k+(1+uv)C_k)\\
    &= vC_{k+1}
\end{align*}
where the third inequality follows by the induction hypothesis. Having exhausted all possibilities, we obtain that $a'\leq\frac{v}{u}C_n$.

\end{proof}

A careful reading of the proof above will show that the assumption that $u\geq v\geq 2$ was needed to ensure that the inequalities $2v\leq 1+uv$ and $2u\leq 1+uv$ both hold true. If $v=1$, then the second inequality does not hold in general. We begin our alternate approach with a critical definition.

\begin{definition}\label{def} Let $f(x)=\sum_{i=0}^na_ix^i$ and $g(x)=\sum_{i=0}^mb_ix^i$ be polynomials over $\mathbb{N}_0$. If $\sum_{k\geq N}a_k\geq \sum_{k\geq N}b_k$ for every nonnegative integer $N,$ then we say that $f(x)\succcurlyeq g(x)$. Here we assume that $a_i=0$ for $i>n$ and $b_j=0$ for $j>m$.
\end{definition}

Note some properties of the above definition.
\begin{enumerate}
\item The relation
is a partial order.
\item If  $f(x)\succcurlyeq g(x)$, then $\deg(f)\ge \deg(g)$.
\item If $f_1(x)\succcurlyeq g_1(x)$ and $f_2(x)\succcurlyeq g_2(x)$, then $f_1(x)+f_2(x)\succcurlyeq g_1(x)+g_2(x)$.
\item If $f(x)\succcurlyeq g(x)$ and $g(x)\succcurlyeq h(x)$, then $f(x)\succcurlyeq h(x)$.
\item If $f(x)=g(x)+h(x)$ for some polynomial $h(x)$ over $\mathbb{N}_0$, then $f(x)\succcurlyeq g(x)$.
\item We have that $x^if(x)\succcurlyeq x^jf(x)$ for $i\ge j \ge 0$. (This is due to a simple shift in the coefficients of the polynomial $f(x)$.)
\item If $a_i\ge b_i$ for each $i$ then $\sum_{i=0}^na_ix^i \succcurlyeq \sum_{i=0}^mb_ix^i$.
\end{enumerate}

The importance of Definition~\ref{def} appears in the following lemma. It is a straightforward property that can be used to determine if one polynomial is greater than or equal to another when evaluated over positive integers.

\begin{lemma}\label{domineq}
If $f(x)\succcurlyeq g(x)$, then $f(r)\geq g(r)$ for every positive integer $r$.
\end{lemma}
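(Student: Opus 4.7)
The plan is to prove this by a summation-by-parts (Abel) manipulation. Introduce the difference polynomial $h(x) := f(x) - g(x) = \sum_{k \geq 0} c_k x^k$ where $c_k := a_k - b_k$ (extending with zeros past the actual degrees). Note that the $c_k$ themselves may be negative, so the desired inequality does not follow coefficient-wise; however, by the definition of $\succcurlyeq$, the tail sums
\[
C_N := \sum_{k \geq N} c_k = \sum_{k \geq N} a_k - \sum_{k \geq N} b_k
\]
satisfy $C_N \geq 0$ for every nonnegative integer $N$. This is the hypothesis we will actually use.

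Next, I would rewrite $h(r)$ in terms of these tail sums. Writing $c_k = C_k - C_{k+1}$, substituting into $h(r) = \sum_{k \geq 0} c_k r^k$, and shifting the index in the second piece (the sum is finite since $C_N = 0$ for $N$ larger than $\max\{\deg f,\deg g\}$) telescopes to
\[
h(r) \;=\; C_0 \;+\; (r-1)\sum_{k \geq 1} C_k\, r^{k-1}.
\]
Once this identity is in hand, the conclusion is immediate: for any positive integer $r \geq 1$, the factor $r - 1$ is nonnegative, each $C_k$ is nonnegative, and $C_0 \geq 0$, so $h(r) \geq 0$, i.e.\ $f(r) \geq g(r)$. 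The $r=1$ case reduces simply to $h(1) = C_0 \geq 0$.

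There is no serious obstacle here; the only subtlety is noticing that a naive coefficient-wise comparison fails, so one must repackage $h(r)$ so that the hypothesis on tail sums can be applied directly. Summation by parts is precisely the tool that does this, converting a statement about partial sums of coefficients into a statement about values of the polynomial.
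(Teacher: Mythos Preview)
Your proof is correct and takes a genuinely different route from the paper. The paper argues by an iterative coefficient-redistribution: starting from $f$, it locates the highest index $m_0$ where $b_{m_0}>a_{m_0}$, moves the surplus $\epsilon_i=a_i-b_i$ from higher coefficients down to the $x^{m_0}$ term to form a new polynomial $f_{m_0}$, checks that $f(r)\geq f_{m_0}(r)$ for $r\geq 1$ and that $f_{m_0}(x)\succcurlyeq g(x)$ still holds, and repeats until reaching an $f_{m_k}$ dominating $g$ coefficient-wise. Your Abel summation does in one stroke what the paper does by this finite iteration: the identity $h(r)=C_0+(r-1)\sum_{k\geq 1}C_k r^{k-1}$ directly exhibits $h(r)$ as a nonnegative combination of the tail sums $C_k$, so no inductive bookkeeping is needed. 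The paper's argument has the minor advantage of producing an explicit intermediate polynomial with termwise-dominating coefficients, but for the lemma as stated your approach is shorter and conceptually cleaner.
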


\begin{proof}
Suppose $f(x)=\sum_{i=0}^na_ix^i$ and $g(x)=\sum_{i=0}^mb_ix^i$ where $a_n, b_m\ne 0$. By hypothesis, we must have $n\geq m$.

Suppose that $b_{m_0}$ is such that $b_{m_0}>a_{m_0}$ and $b_i\leq a_i$ for all $i>m_0$. Let $\epsilon_i=a_i-b_i$ for $i>m_0$ and define a new polynomial $f_{m_0}(x)=\sum_{i=0}^nc_ix^i$ by
\begin{align*}
f_{m_0}(x) & =\sum_{i=m_0+1}^{n}(a_i-\epsilon_i)x^i+\left(a_{m_0}+\sum_{i=m_0+1}^{n}\epsilon_i\right)x^{m_0}+\sum_{i=0}^{m_0}a_ix^i.
\end{align*}
It follows that $f_{m_0}(x)\succcurlyeq g(x)$ and that $b_i\leq c_i$ for all $i\geq m_0$. Furthermore,
\begin{align*}
f(r) &= \sum_{i=0}^na_ir^i\\
&= \sum_{i=m_0+1}^{n}(a_i-\epsilon_i+\epsilon_i)r^i+a_{m_0}r^{m_0}+\sum_{i=0}^{m_0}a_ir^i\\
&\geq \sum_{i=m_0+1}^{n}(a_i-\epsilon_i)r^i+\left(a_{m_0}+\sum_{i=m_0+1}^{n}\epsilon_i\right)r^{m_0}+\sum_{i=0}^{m_0}a_ir^i\\
&= f_{m_0}(r).
\end{align*}
Iterating this procedure will generate a finite list of polynomials $f_{m_0}(x)$, $f_{m_1}(x),\dots,$ $f_{m_k}(x)$ with $f(r)\geq f_{m_0}(r)\geq \cdots\geq f_{m_k}(r)$ and $f_{m_k}(x)=\sum_{i=0}^nd_ix^i$ such that $d_i\geq b_i$ for all $1\leq i\leq n$. Clearly $f_{m_k}(r)\geq g(r)$, which gives the desired result.
\end{proof}

Note that the converse of Lemma~\ref{domineq} is not true. If $f(x)=x^3+1$ and $g(x)=x^2+x$, then $f(r)\geq g(r)$ for every positive integer $r$, but it is \textbf{not} true that $f(x)\succcurlyeq g(x)$.

In order to apply Lemma~\ref{domineq} to our current case, we first show that the left column entries of matrices appearing in $\mathcal{T}^{(u,1)}(I_2)$ can all be expressed as polynomials evaluated at $u$. We also explicitly compute such polynomials for certain families of matrices, namely matrices of the form $(L_uR_1)^nL_u$ and $(R_1L_u)^nL_u$.

\begin{lemma}\label{polyrep}
Let $M'\in\mathcal{T}^{(u,1)}(M;n)$ be given by $M'=\begin{bmatrix} a' & \ast\\ c' & \ast\end{bmatrix}$. Then $a'=f(u)$ and $c'=g(u)$ where $f(x)$ and $g(x)$ are polynomials over $\mathbb{N}_0$ with $f(0)=1$ and $g(0)=0$.
\end{lemma}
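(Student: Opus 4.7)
The proof is a straightforward induction on the depth $n$, tracking how the two generators act on the left column.

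The plan is to induct on $n$, using the specific form of the left-column update rules when $v=1$. For the base case $n=0$, the only depth-zero element is $M=I_2$ itself, so $a'=1$ and $c'=0$, realized by $f(x)=1$ and $g(x)=0$, which trivially satisfy $f(0)=1$ and $g(0)=0$.

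For the inductive step, assume every matrix $M''\in\mathcal{T}^{(u,1)}(M;n)$ has left column $(f(u),g(u))^T$ with $f,g\in\mathbb{N}_0[x]$, $f(0)=1$, and $g(0)=0$. Any $M'\in\mathcal{T}^{(u,1)}(M;n+1)$ is either $L_uM''$ or $R_1M''$ for such an $M''$. Applying the child rules from the definition of the tree, we get the left columns
\begin{align*}
L_uM'' : \quad \begin{bmatrix} f(u) \\ uf(u)+g(u)\end{bmatrix}, \qquad R_1M'' : \quad \begin{bmatrix} f(u)+g(u) \\ g(u)\end{bmatrix}.
\end{align*}
Thus define new polynomials
\begin{align*}
\tilde f(x)=f(x),\ \tilde g(x)=xf(x)+g(x) \quad \text{(left child)},\\
\tilde f(x)=f(x)+g(x),\ \tilde g(x)=g(x) \quad \text{(right child)}.
\end{align*}
In both cases $\tilde f,\tilde g\in\mathbb{N}_0[x]$ since addition and multiplication by $x$ preserve this set. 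Evaluating at $0$, the left-child case gives $\tilde f(0)=f(0)=1$ and $\tilde g(0)=0\cdot f(0)+g(0)=0$; the right-child case gives $\tilde f(0)=f(0)+g(0)=1+0=1$ and $\tilde g(0)=g(0)=0$. Both satisfy the required conditions, completing the induction.

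There is essentially no obstacle here: the lemma is a direct consequence of the recursive construction of $\mathcal{T}^{(u,1)}(I_2)$, and the point of isolating it is to set up the polynomial framework in which Lemma~\ref{domineq} and the partial order $\succcurlyeq$ can later be applied to compare left-column entries of different depth-$n$ matrices. The only subtle point to flag is that the assertion $f(0)=1$, $g(0)=0$ implicitly uses the initial conditions at the root $I_2$; the invariant being preserved is precisely that of the identity matrix's left column.
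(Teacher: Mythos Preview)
Your proof is correct and follows essentially the same induction argument as the paper: both establish the base case at the root and then verify that the left-column update rules for $L_uM''$ and $R_1M''$ preserve the polynomial conditions. Your version is slightly more explicit in checking $\tilde f(0)$ and $\tilde g(0)$, and your closing remark correctly flags that the lemma tacitly assumes the root is $I_2$ (otherwise the base case would not give $f(0)=1$, $g(0)=0$), which the paper leaves implicit.
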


\begin{proof}
Clearly the statement is true for $n=0$.

Suppose that the statement holds for all matrices of depth $k$ for some $k\geq 0$. Let $M'\in\mathcal{T}^{(u,1)}(M;k+1)$. It follows that $M'=L_uM''$ or $M'=R_1M''$ for some $M''\in\mathcal{T}^{(u,1)}(M;k)$. By assumption, $M''=\begin{bmatrix} f(u) & \ast\\ g(u) & \ast\end{bmatrix}$ for some polynomials $f(x)$ and $g(x)$ over $\mathbb{N}_0$. It follows that $L_uM''=\begin{bmatrix} f(u) & \ast\\ uf(u)+g(u) & \ast\end{bmatrix}$ and $R_1M''=\begin{bmatrix} f(u)+g(u) & \ast\\ g(u) & \ast\end{bmatrix}$. In either case, it is obvious that the statement holds for $M'$, which gives the result by induction.
\end{proof}

Note that the polynomials in Lemma~\ref{polyrep} depend on $M$, but not on the value of $u$.

We will make extensive use of the following result based on Pascal's rule that $\binom{n-1}{k-1}+\binom{n-1}{k}=\binom{n}{k}$ for $1\leq k\leq n$.

\begin{lemma}\label{aux}
We have that \[\displaystyle\sum_{i=0}^{a-1}\binom{b-i}{i} x^{a-i} + \sum_{i=0}^{a}\binom{b+1-i}{i} x^{a+1-i}
=\sum_{i=0}^{a}\binom{b+2-i}{i} x^{a+1-i}.\]
\end{lemma}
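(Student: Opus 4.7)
The plan is to prove the identity by matching coefficients of each power of $x$ on both sides and invoking Pascal's rule exactly once per term.

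First I would re-index the first sum on the left-hand side so that every term has an exponent of the form $x^{a+1-i}$. Substituting $i \mapsto i-1$ in $\sum_{i=0}^{a-1}\binom{b-i}{i}x^{a-i}$ converts it to $\sum_{i=1}^{a}\binom{b+1-i}{i-1}x^{a+1-i}$. Combined with the second sum, the entire LHS becomes
\[
\sum_{i=0}^{a}\binom{b+1-i}{i}x^{a+1-i} + \sum_{i=1}^{a}\binom{b+1-i}{i-1}x^{a+1-i}.
\]

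Next I would compare this to the RHS coefficient by coefficient. For the top term ($i=0$), the LHS contributes $\binom{b+1}{0}=1$ and the RHS contributes $\binom{b+2}{0}=1$, so these agree. For each $i$ with $1 \le i \le a$, the coefficient of $x^{a+1-i}$ on the LHS is
\[
\binom{b+1-i}{i} + \binom{b+1-i}{i-1},
\]
which by Pascal's rule (taking $n = b+2-i$ and $k = i$) equals $\binom{b+2-i}{i}$, matching the RHS.

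There is essentially no obstacle here beyond careful index bookkeeping; the identity is a polynomial restatement of Pascal's rule, applied termwise. The only thing worth double-checking is the boundary case $i=0$, where the first sum on the LHS contributes nothing (the shifted index starts at $1$) and the entire coefficient comes from $\binom{b+1}{0}$, which still equals the RHS coefficient $\binom{b+2}{0}$. Once the two boundary checks and the interior Pascal step are verified, the identity follows.
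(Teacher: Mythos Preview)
Your proposal is correct and follows essentially the same approach as the paper: re-index the first sum via $i\mapsto i-1$, then apply Pascal's rule termwise for $1\le i\le a$ and check the $i=0$ boundary term separately.
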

\begin{proof}
\begin{eqnarray*}
&&\displaystyle\sum_{i=0}^{a-1}\binom{b-i}{i} x^{a-i} + \sum_{i=0}^{a}\binom{b+1-i}{i} x^{a+1-i} \\
&=& \displaystyle\sum_{i=1}^{a}\binom{b+1-i}{i-1} x^{a+1-i} + \sum_{i=0}^{a}\binom{b+1-i}{i} x^{a+1-i} \\
&=& \displaystyle\sum_{i=1}^{a}\left[\binom{b+1-i}{i-1}  + \binom{b+1-i}{i}  \right]x^{a+1-i}+x^{a+1} \\
&=& \sum_{i=0}^{a}\binom{b+2-i}{i} x^{a+1-i}
\end{eqnarray*}
\end{proof}

Lemma~\ref{aux} contains an identity involving binomial coefficients that we apply multiple times with different parameters in Lemma~\ref{FnGn}, Lemma~\ref{HnIn}, Proposition~\ref{sim}, and Proposition~\ref{sim2}. We elected to carefully document our use of the lemma and the partial ordering of polynomials for clarity. The reader may choose to skip to Proposition~\ref{v1version} after reading Lemma~\ref{FnGn}.

\begin{lemma}\label{FnGn}
For any $n\geq 0$, let $F_n(x)$ and $G_n(x)$ be the polynomials over $\mathbb{N}_0$ such that $(L_uR_1)^nL_u=\begin{bmatrix} F_n(u) & \ast\\ G_n(u) & \ast\end{bmatrix}.$ Then
\begin{align*}
F_n(x) &= \sum_{i=0}^n\binom{2n-i}{i}x^{n-i}
\end{align*}
and
\begin{align*}
G_n(x) &= \sum_{i=0}^n\binom{2n+1-i}{i}x^{n+1-i}.
\end{align*}
\end{lemma}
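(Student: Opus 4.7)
The plan is to proceed by induction on $n$. The base case $n=0$ is a direct check: $L_u = \begin{bmatrix} 1 & 0 \\ u & 1\end{bmatrix}$, while the claimed formulas give $F_0(x) = \binom{0}{0}x^0 = 1$ and $G_0(x) = \binom{1}{0}x = x$, matching.

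For the inductive step, the first task is to extract polynomial recurrences for $F_n$ and $G_n$. Writing $(L_uR_1)^{n+1}L_u = (L_uR_1)\cdot(L_uR_1)^n L_u$ and multiplying out using $L_uR_1 = \begin{bmatrix} 1 & 1 \\ u & 1+u\end{bmatrix}$ (equivalently, using Lemma~\ref{system} with $v=1$ together with Proposition~\ref{alternating}), the left column of $(L_uR_1)^{n+1}L_u$ equals
\[\bigl(F_n(u)+G_n(u),\; uF_n(u)+(1+u)G_n(u)\bigr)^{T}.\]
Since by Lemma~\ref{polyrep} these entries arise from fixed polynomials in $u$, the identities hold as polynomial equalities, giving
\[F_{n+1}(x) = F_n(x) + G_n(x) \qquad \text{and} \qquad G_{n+1}(x) = x\,F_{n+1}(x) + G_n(x),\]
where the second form uses the first to rewrite $uF_n+(1+u)G_n = u(F_n+G_n)+G_n$. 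It therefore suffices to check that the proposed closed forms satisfy both of these recurrences.

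Both of these follow from direct applications of Lemma~\ref{aux}. Taking $a=n+1$ and $b=2n$ there, the first sum on the left coincides with $x\,F_n(x)$, the second sum with $x\,G_n(x)$, and the sum on the right with $x\,F_{n+1}(x)$; cancelling the common factor of $x$ yields the first recurrence. Taking instead $a=n+1$ and $b=2n+1$, the analogous identification gives $G_n(x) + x\,F_{n+1}(x) = G_{n+1}(x)$, which is the second recurrence. In matching sums, one uses that binomial coefficients $\binom{m}{i}$ with $i>m$ vanish, so the upper summation limits of $n$ versus $n+1$ can be adjusted harmlessly. The whole argument is conceptual bookkeeping of binomial sums; there is no genuine obstacle beyond careful reindexing to see that Lemma~\ref{aux} is precisely the Pascal-rule identity our two recurrences require.
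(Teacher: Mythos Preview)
Your proof is correct and follows essentially the same approach as the paper: establish the base case, extract the recurrences $F_{n+1}=F_n+G_n$ and $G_{n+1}=xF_{n+1}+G_n$ from Proposition~\ref{alternating}, and verify them via Lemma~\ref{aux}. The only cosmetic difference is that for the first recurrence you apply Lemma~\ref{aux} with $(a,b)=(n+1,2n)$ (multiplying through by $x$ and absorbing a vanishing binomial), whereas the paper uses $(a,b)=(k,2k)$ and handles the constant term $1$ separately; for the second recurrence your parameter choice $(a,b)=(n+1,2n+1)$ coincides exactly with the paper's.
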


\begin{proof}
Since $L_u=\begin{bmatrix} 1 & 0\\ u & 1\end{bmatrix}$, it is clear that $F_0(x)=1$ and $G_0(x)=x$, which satisfy the desired conclusion in the case $n=0$. For $n\geq 0$, note that, by Proposition~\ref{alternating}, $F_{n+1}(x)=F_n(x)+G_n(x)$ and $G_{n+1}(x)=xF_n(x)+(1+x)G_n(x) = xF_{n+1}(x)+G_n(x)$. In particular, if we assume that the conclusion holds for some $k\geq 0$, then
by Lemma~\ref{aux} we obtain that
\begin{align*}
F_{k+1}(x) &= F_k(x)+G_k(x)\\
&= \sum_{i=0}^k\binom{2k-i}{i}x^{k-i} + \sum_{i=0}^k\binom{2k+1-i}{i}x^{k+1-i}\\
&= \sum_{i=0}^{k}\binom{2k+2-i}{i}x^{k+1-i}+1\\
&= \sum_{i=0}^{k+1}\binom{2k+2-i}{i}x^{k+1-i}.
\end{align*}
Also,
\begin{align*}
G_{k+1}(x) &= G_k(x)+xF_{k+1}(x)\\
&= \sum_{i=0}^k\binom{2k+1-i}{i}x^{k+1-i} + \sum_{i=0}^{k+1}\binom{2k+2-i}{i}x^{k+2-i}\\
&= \sum_{i=0}^{k+1}\binom{2k+3-i}{i}x^{k+2-i}.
\end{align*}
The result follows by induction.
\end{proof}

Note that $F_n(x^2)=\mathcal{F}_{2n-1}(x)$ where $\mathcal{F}_m(x)$ is the $m^{\text{th}}$ Fibonacci polynomial~\cite{BQ}.

\begin{lemma}\label{HnIn}
For any $n\geq 1$, let $H_n(x)$ and $I_n(x)$ be the polynomials over $\mathbb{N}_0$ such that $(R_1L_u)^nL_u=\begin{bmatrix} H_n(u) & \ast\\ I_n(u) & \ast\end{bmatrix}.$ Then
\begin{align*}
H_n(x) &= \sum_{i=0}^n\left(\binom{2n-i}{i}+\binom{2n-1-i}{i}\right)x^{n-i}
\end{align*}
and
\begin{align*}
I_n(x) &= \sum_{i=0}^{n-1}\left(\binom{2n-1-i}{i}+\binom{2n-2-i}{i}\right)x^{n-i}.
\end{align*}
\end{lemma}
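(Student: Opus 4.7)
The plan is to induct on $n$, reducing the computation of $H_n$ and $I_n$ to Lemma~\ref{FnGn} via a pair of auxiliary identities. To set up the recursion, factor $(R_1L_u)^nL_u = (R_1L_u)\cdot(R_1L_u)^{n-1}L_u$ for $n\geq 1$. Since $R_1L_u = \begin{bmatrix} 1+u & 1 \\ u & 1\end{bmatrix}$, reading off the left column of the product yields the polynomial recursions
\begin{align*}
H_n(x) &= (1+x)H_{n-1}(x) + I_{n-1}(x),\\
I_n(x) &= xH_{n-1}(x) + I_{n-1}(x),
\end{align*}
with seed values $H_0(x)=1$ and $I_0(x)=x$ coming from $(R_1L_u)^0L_u = L_u$.

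Rather than wrestle with four binomial sums at once, I would first prove the auxiliary identities
\begin{align*}
H_n(x) &= F_n(x) + G_{n-1}(x),\\
I_n(x) &= G_{n-1}(x) + xF_{n-1}(x),
\end{align*}
for all $n\geq 1$, by induction on $n$. The base case $n=1$ is immediate from $F_0(x)=1$, $G_0(x)=x$, $F_1(x)=1+x$: one has $F_1+G_0 = 1+2x = H_1$ and $G_0+xF_0 = 2x = I_1$, consistent with the direct computation $R_1L_u^2 = \begin{bmatrix} 1+2u & 1 \\ 2u & 1\end{bmatrix}$. For the inductive step, substituting the identities at level $n$ into the recursions for $H_{n+1}$ and $I_{n+1}$ and expanding, both desired identities at level $n+1$ reduce to the single relation $F_{n-1}(x)+G_{n-1}(x) = F_n(x)$, which is exactly the recursion from Proposition~\ref{alternating} that is used in Lemma~\ref{FnGn}.

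With the auxiliary identities in hand, the claimed formulas follow by substituting the explicit expressions for $F_n(x)$, $F_{n-1}(x)$, and $G_{n-1}(x)$ from Lemma~\ref{FnGn} and collecting like powers of $x$; the upper summation limit on the $G_{n-1}$ contribution to $H_n$ can be extended from $n-1$ to $n$ using $\binom{n-1}{n} = 0$, which matches the form stated in the lemma. The main obstacle is spotting the correct intermediate identities $H_n = F_n + G_{n-1}$ and $I_n = G_{n-1}+xF_{n-1}$; these can be anticipated by decomposing each binomial pair in the statement into the single-binomial pieces of Lemma~\ref{FnGn}, after which their verification is mechanical. A purely direct induction via Pascal's rule (or equivalently Lemma~\ref{aux}) applied to the four-binomial sums is an alternative route, but requires more careful bookkeeping of the boundary terms at $i=0$ and $i=k+1$.
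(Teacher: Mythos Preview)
Your proof is correct and takes a genuinely different route from the paper's. The paper proceeds by direct induction on the explicit binomial sums: it records the same recursions $H_{n+1}=(1+x)H_n+I_n$ and $I_{n+1}=xH_n+I_n$, then verifies the closed forms by applying Lemma~\ref{aux} (the Pascal-rule identity) to each pair of binomial sums, first computing $I_{k+1}=xH_k+I_k$ and then $H_{k+1}=H_k+I_{k+1}$. Your approach via the auxiliary identities $H_n=F_n+G_{n-1}$ and $I_n=G_{n-1}+xF_{n-1}$ is tidier: it offloads all the binomial bookkeeping onto Lemma~\ref{FnGn}, so that once the auxiliary identities are established no new sum manipulations are needed, and the boundary terms you warn about in the direct method never arise. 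One small imprecision to fix: the inductive step for the auxiliary identities does not reduce to the single relation $F_{n-1}+G_{n-1}=F_n$; you also need the companion recursion $G_n=xF_n+G_{n-1}$ (equivalently $G_n=xF_{n-1}+(1+x)G_{n-1}$). Both relations are established in Proposition~\ref{alternating} and used in the proof of Lemma~\ref{FnGn}, so the argument goes through, but you should state explicitly that both are invoked.
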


\begin{proof}
As in Lemma~\ref{FnGn}, the case $n=1$ follows trivially. Note that $H_{n+1}(x)=(1+x)H_n(x)+I_n(x)$ and $I_{n+1}(x)=xH_n(x)+I_n(x)$. If we assume that the conclusion holds for some $k\geq 0$, then by Lemma~\ref{aux} we get that
\begin{align*}
I_{k+1}(x) &= xH_k(x)+I_k(x)\\
&= \sum_{i=0}^k\binom{2k-i}{i}x^{k+1-i}+\sum_{i=0}^{k-1}\binom{2k-1-i}{i}x^{k-i}+\sum_{i=0}^k\binom{2k-1-i}{i}x^{k+1-i}\\
& \qquad+\sum_{i=0}^{k-1}\binom{2k-2-i}{i}x^{k-i}\\
&=\sum_{i=0}^k\left(\binom{2k+1-i}{i}+\binom{2k-i}{i}\right)x^{k+1-i}
\end{align*}
and
\begin{align*}
H_{k+1}(x)&= H_k(x)+I_{k+1}(x)\\
&= \sum_{i=0}^k\left(\binom{2k-i}{i} x^{k-i}+\binom{2k+1-i}{i} x^{k+1-i}\right)\\
& \qquad+ \sum_{i=0}^k \left(\binom{2k-1-i}{i}x^{k-i}+\binom{2k-i}{i}x^{k+1-i}\right)\\
&= 1+\sum_{i=0}^{k}\binom{2k+2-i}{i}x^{k+1-i} + \sum_{i=0}^{k}\binom{2k+1-i}{i}x^{k+1-i}\\
&= \sum_{i=0}^{k+1}\left(\binom{2k+2-i}{i}+\binom{2k+1-i}{i}\right)x^{k+1-i}.
\end{align*}
The result follows by induction.
\end{proof}

The main difference between the cases $u\geq v\geq 2$ and (the current) $u\geq v=1$ is expressed by Lemma~\ref{HnIn} above. The failure of the inequality $2v\leq 1+uv$ in the proof of Proposition~\ref{ineq} means that we must consider two sets of families of matrices as candidates for the maximal left column entry of odd depth. While a little more work is involved, we obtain the desired result with the propositions that follow.

\begin{definition}
If $f(x)$ is a polynomial over $\mathbb{N}_0$, we let $[f]_n$ denote the coefficient of $f$ associated with $x^n$. If $n>\deg(f)$, then $[f]_n=0$.
\end{definition}

\begin{proposition}\label{sim}
For any $n\geq 1$, we have that:
\begin{enumerate}
\item[(a)] $I_n(x)\preccurlyeq H_n(x) \preccurlyeq G_n(x)$,
\item[(b)] $H_n(x) + I_n(x) \preccurlyeq F_n(x) + G_n(x)$.
\end{enumerate}
\end{proposition}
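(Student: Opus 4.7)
The plan is to reduce part (b) to a cleaner auxiliary inequality and then prove (a) together with that auxiliary inequality by simultaneous induction on $n$.

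First I would observe, directly from the closed forms in Lemmas~\ref{FnGn} and~\ref{HnIn} (after noting that the $i=n$ term $\binom{n-1}{n}$ vanishes in the $\binom{2n-1-i}{i}$ sum defining $H_n$), the identity
\begin{align*}
H_n(x)=F_n(x)+G_{n-1}(x)\qquad\text{for every }n\ge 1.
\end{align*}
Combined with the recurrence $G_n=G_{n-1}+xF_n$ extracted from the proof of Lemma~\ref{FnGn}, this yields
\begin{align*}
(F_n(x)+G_n(x))-(H_n(x)+I_n(x))=xF_n(x)-I_n(x),
\end{align*}
so part (b) is \emph{equivalent} to the auxiliary statement
\begin{align*}
\mathrm{(c)}\qquad I_n(x)\preccurlyeq xF_n(x).
\end{align*}
Spotting this reduction is the main conceptual step; attempting a direct induction on (b) instead forces mixed-sign expressions such as $(1-x)(H_n-I_n)$ to appear, which do not respect $\succcurlyeq$.

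Next I would prove (a) and (c) simultaneously by induction on $n\ge 1$. The base case $n=1$ is a direct tail-sum check using $F_1=x+1$, $G_1=x^2+2x$, $H_1=2x+1$, $I_1=2x$. For the inductive step, assuming (a) and (c) at level $n$, I would first establish (c) at level $n+1$ by using $F_{n+1}=F_n+G_n$ and $I_{n+1}=xH_n+I_n$ to write
\begin{align*}
xF_{n+1}-I_{n+1}=(xF_n-I_n)+x(G_n-H_n),
\end{align*}
so that the partial tail sums $\sum_{k\ge N}$ of each summand are nonnegative by the inductive hypotheses, since multiplication by $x$ merely shifts tail sums and therefore preserves $\succcurlyeq$.

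For (a) at level $n+1$, the bound $I_{n+1}\preccurlyeq H_{n+1}$ is immediate because the recurrences combine to give $H_{n+1}-I_{n+1}=H_n$, a polynomial with nonnegative coefficients. For $H_{n+1}\preccurlyeq G_{n+1}$, I would use $G_{n+1}=G_n+xF_{n+1}$ and $H_{n+1}=H_n+I_{n+1}$ to write
\begin{align*}
G_{n+1}-H_{n+1}=(G_n-H_n)+(xF_{n+1}-I_{n+1}),
\end{align*}
whose tail sums are nonnegative by the inductive hypothesis (a) at level $n$ and the just-established (c) at level $n+1$. Finally, (b) at level $n+1$ follows at once from (c) at level $n+1$ via the initial reduction. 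The hard part is identifying the right auxiliary inequality; once (c) is in hand, the three statements close up cleanly under the recurrences and the argument becomes essentially bookkeeping.
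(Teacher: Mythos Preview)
Your proof is correct and takes a genuinely different route from the paper. The paper proves both parts by direct computation: it writes out the tail sums $\sum_{i\ge k}[\,\cdot\,]_i$ from the explicit binomial formulas of Lemmas~\ref{FnGn} and~\ref{HnIn}, collapses them with Lemma~\ref{aux} (Pascal's rule) at $x=1$, and reduces each desired inequality to a single elementary binomial comparison such as $\binom{n+k-1}{n-k}\le\binom{n+k}{n-k+1}$. No induction on $n$ is used. Your approach instead isolates the identity $H_n=F_n+G_{n-1}$, recasts (b) as the auxiliary bound $I_n\preccurlyeq xF_n$, and then runs a simultaneous induction on (a) and this auxiliary using only the recurrences $F_{n+1}=F_n+G_n$, $G_{n+1}=G_n+xF_{n+1}$, $H_{n+1}=H_n+I_{n+1}$, $I_{n+1}=xH_n+I_n$ and $H_{n+1}-I_{n+1}=H_n$; you never touch a binomial tail sum. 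What you gain is a shorter, recurrence-driven argument that meshes naturally with the style of Proposition~\ref{v1version}; what the paper's computation gains is an explicit closed form for each tail-sum difference (and hence a sense of how much slack the inequalities have), at the cost of more binomial bookkeeping. As a side remark, your auxiliary inequality can also be read off non-inductively from the paper's Proposition~\ref{sim2}(c): since $I_{n+1}=xF_n+G_n$ and $xF_{n+1}=xF_n+xG_n$, the tail sums of $xF_{n+1}-I_{n+1}=(x-1)G_n$ telescope to $[G_n]_{N-1}\ge 0$.
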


\begin{proof}
Since, for any $n\geq 1$, $(R_1L_u)^nL_u$ is $v$-UD, it follows that $I_n(x)\preccurlyeq H_n(x)$.  Let  $0\leq k\leq n$.  By Lemma~\ref{HnIn} and Lemma~\ref{aux} with $x=1$,

\[
\sum_{i\geq k}[H_n]_i = \sum_{i=0}^{n-k}\left(\binom{2n-i}{i}+\binom{2n-1-i}{i}\right)
= \sum_{i=0}^{n-k}\binom{2n+1-i}{i}+\binom{n+k-1}{n-k}
\]
and, by Lemma~\ref{FnGn},
\[
\sum_{i\geq k}[G_n]_i = \sum_{i=0}^{n-k+1}\binom{2n+1-i}{i}
= \sum_{i=0}^{n-k}\binom{2n+1-i}{i}+\binom{n+k}{n-k+1}.
\]
To complete the proof of (a), it is enough to show that $\binom{n+k-1}{n-k}\leq\binom{n+k}{n-k+1}$. Note that, for $k=0$, we have that the desired inequality holds trivially. For $k\ge 1$, since $n-k+1\leq n+k$,
\[
\binom{n+k-1}{n-k} \leq \binom{n+k-1}{n-k}\cdot\frac{n+k}{n-k+1}
= \binom{n+k}{n-k+1},
\]as desired.

By Lemma~\ref{aux} with $x=1$ and Lemma~\ref{HnIn},
\begin{align*}
\sum_{i\geq k}[H_n+I_n]_i &= \sum_{i=0}^{n-k}\left(\binom{2n-i}{i}+\binom{2n-1-i}{i}+\binom{2n-1-i}{i} +\binom{2n-2-i}{i}\right)\\
&= \sum_{i=0}^{n-k}\left(\binom{2n-i}{i}+\binom{2n+1-i}{i}\right)+\binom{n+k-2}{n-k}+\binom{n+k-1}{n-k}.
\end{align*}
As in the proof of (a), it can be shown that $\binom{n+k-2}{n-k}\leq\binom{n+k-1}{n-k+1}$ for $0\leq k\leq n$. This is enough to obtain (b) since, by Lemma~\ref{FnGn},
\begin{align*}
\sum_{i\geq k}[F_n+G_n]_i &= \sum_{i=0}^{n-k}\left(\binom{2n-i}{i}+\binom{2n+1-i}{i}\right)+\binom{n+k}{n-k+1}.
\end{align*}
\end{proof}

\begin{proposition}\label{sim2}
For any $n\geq 1$, we have that:
\begin{enumerate}
\item[(a)] $2xH_n(x)+I_n(x)\preccurlyeq G_{n+1}(x)$,
\item[(b)] $F_n(x) + 2G_n(x) \preccurlyeq H_{n+1}(x)$,
\item[(c)] $xF_n(x)+G_n(x) = I_{n+1}(x)$.
\end{enumerate}
\end{proposition}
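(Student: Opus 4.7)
The plan breaks naturally into three parts. Part (c) is direct: expanding $xF_n(x)+G_n(x)$ using the closed form of Lemma~\ref{FnGn} gives
\[xF_n(x)+G_n(x) = \sum_{i=0}^n\left(\binom{2n-i}{i}+\binom{2n+1-i}{i}\right)x^{n+1-i},\]
which matches the formula for $I_{n+1}(x)$ from Lemma~\ref{HnIn} term-by-term, so (c) holds with equality.

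For (a) and (b), I would first use the polynomial recurrences (read off from the matrix recurrences appearing in the proofs of Lemmas~\ref{FnGn} and~\ref{HnIn}) to simplify the left-hand sides. Since $F_{n+1}(x) = F_n(x)+G_n(x)$, part (b) reduces to the cleaner statement $F_{n+1}(x)+G_n(x)\preccurlyeq H_{n+1}(x)$. Since $I_{n+1}(x)=xH_n(x)+I_n(x)$ (the recurrence for $I_n$ noted in the proof of Lemma~\ref{HnIn}), part (a) reduces to $xH_n(x)+I_{n+1}(x)\preccurlyeq G_{n+1}(x)$. In each case the reformulated inequality is more symmetric and easier to attack than the original.

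Each of the two reduced inequalities is then proved by the tail-sum technique already used in the proof of Proposition~\ref{sim}: fix $k\geq 0$, compute $\sum_{i\geq k}[\cdot]_i$ on both sides using the closed forms from Lemmas~\ref{FnGn} and~\ref{HnIn}, and apply Lemma~\ref{aux} at $x=1$ to telescope adjacent binomial sums like $\binom{2n-i}{i}+\binom{2n+1-i}{i}$ into $\binom{2n+2-i}{i}$. The inequality should then collapse to a comparison of a single pair of leftover binomial coefficients of the form $\binom{n+k-1}{n-k}\leq\binom{n+k}{n-k+1}$, which holds for $k\geq 1$ by the same short argument used in Proposition~\ref{sim}(a), with the case $k=0$ (which governs the top-degree tail and hence equality of degrees) handled by a direct check.

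The main obstacle will be the indexing bookkeeping induced by the factor of $x$ in $xH_n$ and by the differing summation ranges on the two sides after the tail sum is taken: one must correctly track which boundary terms survive at $i=n-k$ or $i=n-k+1$ so that the leftover difference factors into a single, manifestly nonnegative binomial inequality. Once the ranges are aligned, both reduced inequalities follow the Proposition~\ref{sim} template almost verbatim, so no genuinely new combinatorial input is required beyond Lemma~\ref{aux} and Pascal's rule.
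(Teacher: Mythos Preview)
Your proposal is correct and follows essentially the same route as the paper: part (c) is a direct coefficient comparison, and for (a) and (b) you apply the same tail-sum technique from Proposition~\ref{sim}, after the same reduction $2xH_n+I_n=xH_n+I_{n+1}$ that the paper also uses. One minor correction to your expectations: the leftover step does not reduce to a nontrivial binomial inequality in either case---for (a) the residual terms are a sum of nonnegative binomials being \emph{subtracted} (so the inequality is immediate), and for (b) the tail sums actually agree exactly, so $F_n+2G_n=H_{n+1}$ as polynomials; but this only simplifies the execution rather than altering your plan.
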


\begin{proof}
By Lemma~\ref{aux} with $x=1$, Lemma~\ref{FnGn} and Lemma~\ref{HnIn}, for $0\leq k\leq n$, we have that
\begin{align*}
\sum_{i\geq k}[2xH_n+I_n]_i &= \sum_{i\geq k}[xH_n+I_{n+1}]_i\\
&= \sum_{i=0}^{n-k}\left(\binom{2n-1-i}{i}+2\binom{2n-i}{i}+\binom{2n+1-i}{i}\right)\\
&= \sum_{i=0}^{n-k+1}\left(\binom{2n+1-i}{i}+\binom{2n+2-i}{i}\right)\\
&\qquad-\binom{n+k-1}{n-k+1}-\binom{n+k}{n-k+1}\\
&= \sum_{i=0}^{n-k+2}\binom{2n+3-i}{i}-\binom{n+k-1}{n-k+1}\\
&\qquad-\binom{n+k}{n-k+1}-\binom{n+k}{n-k+2}\\
&= \sum_{i=0}^{n-k+2}\binom{2n+3-i}{i}-\binom{n+k-1}{n-k+1}-\binom{n+k+1}{n-k+2}\\
&\leq \sum_{i=0}^{n-k+2}\binom{2n+3-i}{i}\\
&= \sum_{i\geq k}[G_{n+1}]_i,
\end{align*}
proving (a).

By Lemma~\ref{aux} with $x=1$, Lemma~\ref{FnGn} and Lemma~\ref{HnIn}, for $0\leq k\leq n$, we have that
\begin{align*}
\sum_{i\geq k}[F_n+2G_n]_i &= \sum_{i=0}^{n-k}\left(\binom{2n-i}{i}+2\binom{2n+1-i}{i}\right)+2\binom{n+k}{n-k+1}\\
&= \sum_{i=0}^{n-k}\left(\binom{2n+2-i}{i}+\binom{2n+1-i}{i}\right)+\binom{n+k}{n-k}\\
&\qquad+2\binom{n+k}{n-k+1}\\
&= \sum_{i=0}^{n-k}\left(\binom{2n+2-i}{i}+\binom{2n+1-i}{i}\right)+\binom{n+k+1}{n-k+1}\\
&\qquad+\binom{n+k}{n-k+1}\\
&= \sum_{i=0}^{n-k+1}\left(\binom{2n+2-i}{i}+\binom{2n+1-i}{i}\right)\\
&= \sum_{i\geq k}[H_{n+1}]_i,
\end{align*}
which gives (b).

Part (c) follows quickly from Lemma~\ref{FnGn} and Lemma~\ref{HnIn}:
\begin{align*}
xF_n(x)+G_n(x) &= \sum_{i=0}^n\binom{2n-i}{i}x^{n+1-i}+\sum_{i=0}^n\binom{2n+1-i}{i}x^{n+1-i}\\
&= \sum_{i=0}^{n}\left(\binom{2n+1-i}{i}+\binom{2n-i}{i}\right)x^{n+1-i}\\
&= I_{n+1}(x).
\end{align*}
\end{proof}

\begin{proposition}\label{v1version}
Suppose that $M\in \mathcal{T}^{(u,1)}(I_2,2n+1)$ is given by $M=\begin{bmatrix} a & \ast\\ c & \ast\end{bmatrix}$. Then $\max\{a,c\}\leq C_n$ and $a'+c'\leq A_n+C_n$, where $A_n$ and $C_n$ are as defined in Proposition~\ref{alternating}.
\end{proposition}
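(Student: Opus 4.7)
The plan is to prove Proposition~\ref{v1version} by strong induction on $n$, but with a strengthened inductive hypothesis that tracks polynomial dominance rather than just numerical inequalities. Specifically, for any $M\in\mathcal{T}^{(u,1)}(I_2,2n+1)$ with left column $(f(u),g(u))$ (which are polynomials over $\mathbb{N}_0$ by Lemma~\ref{polyrep}), I would prove: if $M$ is $u$-LD then $f\preccurlyeq F_n$, $g\preccurlyeq G_n$, and $f+g\preccurlyeq F_n+G_n$; if $M$ is $v$-UD then $f\preccurlyeq H_n$, $g\preccurlyeq I_n$, and $f+g\preccurlyeq H_n+I_n$. Once this strengthened statement is in hand, Proposition~\ref{sim}(a) gives $H_n\preccurlyeq G_n$ and $I_n\preccurlyeq G_n$, while Proposition~\ref{sim}(b) gives $H_n+I_n\preccurlyeq F_n+G_n$, so in either case $f,g\preccurlyeq G_n$ and $f+g\preccurlyeq F_n+G_n$. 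Lemma~\ref{domineq} then converts these into the numerical bounds $\max\{a,c\}\leq G_n(u)=C_n$ and $a+c\leq F_n(u)+G_n(u)=A_n+C_n$ (the identifications $A_n=F_n(u)$ and $C_n=G_n(u)$ being exactly what Proposition~\ref{alternating} yields when applied to $I_2$).

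For the base case I would check depth one directly: $L_u$ has left column $(1,u)=(F_0(u),G_0(u))$ and $R_1$ has left column $(1,0)$; setting $H_0(x):=1$ and $I_0(x):=0$ makes the strengthened hypothesis hold trivially at $n=0$, and one verifies in passing that Propositions~\ref{sim} and~\ref{sim2} remain valid under this convention. For the inductive step, any $M'\in\mathcal{T}^{(u,1)}(I_2,2k+3)$ has the form $XYM''$ for some $X,Y\in\{L_u,R_1\}$ and some $M''\in\mathcal{T}^{(u,1)}(I_2,2k+1)$, yielding eight subcases indexed by the four choices of $XY$ and by whether $M''$ is $u$-LD or $v$-UD. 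In each subcase I would write down the left column of $M'$ explicitly in terms of $f$, $g$, and $u$, then combine the inductive hypothesis on $M''$ with the appropriate identity from Propositions~\ref{sim} and~\ref{sim2}, plus elementary observations such as $F_k\preccurlyeq G_k$ and $uF_k\preccurlyeq G_k$, both of which follow from the recursive identity $G_k=uF_k+G_{k-1}$ together with property~(5) of $\preccurlyeq$.

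The hardest subcases are the ones where the bounds come close to tightness. For instance, the child $L_u^2M''$ with $M''$ being $v$-UD produces left column $(f,\,2uf+g)$ with $f\preccurlyeq H_k$ and $g\preccurlyeq I_k$, and one needs $2uf+g\preccurlyeq G_{k+1}$; this is exactly Proposition~\ref{sim2}(a). Similarly, $R_1L_uM''$ with $M''$ being $u$-LD produces $((1+u)f+g,\,uf+g)$, which must be dominated by $H_{k+1}$ and $I_{k+1}$ respectively, and Proposition~\ref{sim2}(b),(c) supply precisely those inequalities after some light manipulation. The main obstacle is bookkeeping: keeping straight which of the several available dominance relations to invoke in which of the eight subcases, and in what order. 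Once the appropriate chain is identified for each subcase, every verification reduces to a short computation using only the addition-preserving property of $\preccurlyeq$ and the monomial shift $p\preccurlyeq xp$ for any polynomial $p$ over $\mathbb{N}_0$.
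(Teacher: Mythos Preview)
Your proposal is correct and follows essentially the same approach as the paper: strengthen the inductive hypothesis to polynomial dominance (splitting into the $u$-LD case bounded by $F_n,G_n$ and the $1$-UD case bounded by $H_n,I_n$), then run through the eight subcases using Propositions~\ref{sim} and~\ref{sim2} together with Lemma~\ref{domineq}. The paper drops the redundant sum bound $f+g\preccurlyeq F_n+G_n$ from its inductive hypothesis (it follows from the two individual bounds by additivity of $\preccurlyeq$) and prefers the source-side relations $x\overline{f}\preccurlyeq\overline{g}$ and $\overline{g}\preccurlyeq\overline{f}$ coming from the $u$-LD/$1$-UD structure of $M''$ rather than your target-side relations like $xF_k\preccurlyeq G_k$, but these are interchangeable and the overall argument is the same.
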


\begin{proof}
By Lemma~\ref{polyrep} we have that, for any $n$, $a=f(u)$ and $c=g(u)$ for some polynomials $f(x)$ and $g(x)$ over $\mathbb{N}_0$. By Lemma~\ref{domineq} and Proposition~\ref{sim}, to prove the proposition, it is enough to show that $f(x)\preccurlyeq F_n(x)$ and $g(x)\preccurlyeq G_n(x)$ if $M$ is $u$-LD and $g(x)\preccurlyeq I_n(x)$ and $f(x)\preccurlyeq H_n(x)$ if $M$ is $1$-UD.

As in the proof of Proposition~\ref{ineq}, the above claim is trivially true for $n=0$.

Suppose that the statement is true for all matrices of depth $2k+1$, for some $k\geq 0$. Let $M\in\mathcal{T}^{(u,v)}(I_2,2k+3)$. Then $M\in\mathcal{T}^{(u,v)}(M',2)$ for some $M'\in\mathcal{T}^{(u,v)}(I_2,2k+1)$ with $M'=\begin{bmatrix} \overline{f}(u) & \ast\\ \overline{g}(u) & \ast\end{bmatrix}$ for some polynomials $\overline{f}(x)$ and $\overline{g}(x)$ over $\mathbb{N}_0$. It follows that
\begin{align*}
M=\begin{cases}
\begin{bmatrix} \overline{f}(u) & \ast\\ 2u\overline{f}(u)+\overline{g}(u) & \ast\end{bmatrix} & \text{ if $M=L_u^2M'$,}\\[3ex]
\begin{bmatrix} \overline{f}(u)+\overline{g}(u) & \ast\\ u\overline{f}(u)+(1+u)\overline{g}(u) & \ast\end{bmatrix} & \text{ if $M=L_uR_1M'$,}\\[3ex]
\begin{bmatrix} (1+u)\overline{f}(u)+\overline{g}(u) & \ast\\ u\overline{f}(u)+\overline{g}(u) & \ast\end{bmatrix} & \text{ if $M=R_1L_uM'$,}\\[3ex]
\begin{bmatrix} \overline{f}(u)+2\overline{g}(u) & \ast\\ \overline{g}(u) & \ast\end{bmatrix} & \text{ if $M=R_1^2M'$.}
\end{cases}
\end{align*}
If $M'$ is $u$-LD, then $\overline{g}(x)\succcurlyeq x\overline{f}(x)$. Furthermore, by assumption, it follows that
\begin{align*}
\overline{f}(x) &\preccurlyeq \overline{f}(x)+\overline{g}(x)\\
&\preccurlyeq F_k(x)+G_k(x)\\
&= F_{k+1}(x)
\end{align*}
and
\begin{align*}
2x\overline{f}(x)+\overline{g}(x) &= x\overline{f}(x)+x\overline{f}(x)+\overline{g}(x)\\
&\preccurlyeq x\overline{f}(x)+\overline{g}(x)+\overline{g}(x)\\
&\preccurlyeq x\overline{f}(x)+(1+x)\overline{g}(x)\\
&\preccurlyeq xF_k(x)+(1+x)G_k(x)\\
&= G_{k+1}(x).
\end{align*}
This shows that our claim holds if $M$ is $u$-LD in this case.

By assumption and Proposition~\ref{sim2} part (b) and (c), we have that
\begin{align*}
(1+x)\overline{f}(x)+\overline{g}(x) &\preccurlyeq \overline{f}(x)+2\overline{g}(x)\\
&\preccurlyeq F_k(x)+2G_k(x)\\
&\preccurlyeq H_{k+1}(x)
\end{align*}
and
\begin{align*}
\overline{g}(x)&\preccurlyeq x\overline{f}(x)+\overline{g}(x)\\
&\preccurlyeq xF_k(x)+G_k(x)\\
&= I_{k+1}(x).
\end{align*}
This shows that our claim also holds if $M$ is $1$-UD in this case.

If $M'$ is $1$-UD, then $\overline{f}(x)\succcurlyeq \overline{g}(x)$. Furthermore, by assumption, Proposition~\ref{sim} parts (a) and (b), and Proposition~\ref{sim2} part (a), we have that
\begin{align*}
\overline{f}(x) &\preccurlyeq \overline{f}(x)+\overline{g}(x)\\
&\preccurlyeq H_k(x)+I_k(x)\\
&\preccurlyeq F_k(x)+G_k(x)\\
&= F_{k+1}(x),
\end{align*}
\begin{align*}
2x\overline{f}(x)+\overline{g}(x) &\preccurlyeq 2xH_k(x)+I_k(x)\\
&\preccurlyeq G_{k+1}(x),
\end{align*}
and
\begin{align*}
x\overline{f}(x)+(1+x)\overline{g}(x) &\preccurlyeq xH_k(x)+(1+x)I_k(x)\\
&\preccurlyeq G_{k+1}(x).
\end{align*}
This shows that our claim holds if $M$ is $u$-LD in this case.

Finally,
\begin{align*}
\overline{f}(x)+2\overline{g}(x) &\preccurlyeq (1+x)\overline{f}(x)+\overline{g}(x)\\
 &\preccurlyeq (1+x)H_k(x)+I_k(x)\\
&= H_{k+1}(x)
\end{align*}
and
\begin{align*}
\overline{g}(x) &\preccurlyeq x\overline{f}(x)+\overline{g}(x)\\
&\preccurlyeq xH_k(x)+I_k(x)\\
&= I_{k+1}(x).
\end{align*}
This shows that our claim also holds if $M$ is $1$-UD in this case.
\end{proof}

Proposition~\ref{ineq} and Proposition~\ref{v1version} show that, for $u\geq v$, the left column entries of any descendant of $L_u$ of depth $2n+1$ are bounded above by $C_n$. Furthermore, the propositions show that the upper bound is achieved by the $(2,1)$ entry of $(L_uR_v)^nL_u$. To complete the proof of~\eqref{oddmax} we must show that:
\begin{enumerate}
\item[(A)] the right column entries of any descendant of $L_u$ of depth $2n+1$ and
\item[(B)] all entries of any descendant of $R_v$  of depth $2n+1$
\end{enumerate}
are bounded above by $C_n$.

A proof by induction of (A) follows quickly by noticing that the right column entries of any descendant $M$ of $L_u$ (including $L_u$ itself) are bounded above by the corresponding left column entries of $M$ (see Figure~\ref{fig:idexample}). In fact, the same argument generalizes in the following way.

\begin{lemma}\label{domcol}
Let  $M=\begin{bmatrix}a & b \\ c & d\end{bmatrix}$. If $M$ is a vertex in $\mathcal{L}^{(u,v)}$ ($M$ is a vertex in $\mathcal{R}^{(u,v)}$), then $\mu(M)=\max\{a,c\}$ ($\mu(M)=\max\{b,d\}$).
\end{lemma}

In the case where $M=(R_vL_u)^n$ or $M=(L_uR_v)^nL_u$ for some $n\geq 0$, we can deduce Lemma~\ref{domcol} directly by noting the following relationship between the entries in the first column with the entries in the second column.

\begin{proposition}\label{Symm}
Let $n\geq
0$. If $M=(R_vL_u)^n$, then $M =\begin{bmatrix}bu+d & b \\ \frac{bu}{v} & d\end{bmatrix}$ for some integers $b$ and $d$.  If $M=(L_uR_v)^nL_u$, then $M =\begin{bmatrix} d & b \\ \frac{u}{v}(b+vd) & d\end{bmatrix}$ for some integers $b$ and $d$.
\end{proposition}

\begin{proof}
Suppose $M=(R_vL_u)^n$. The result is true for $n=0$. Suppose that $n=k$ and $M =\begin{bmatrix}bu+d & b \\ \frac{bu}{v} & d\end{bmatrix}$, then
\begin{align*}
    (R_vL_u)^{k+1} &= R_vL_uM\\
    &= \begin{bmatrix}1+uv & v \\ u & 1\end{bmatrix}\begin{bmatrix}bu+d & b \\ \frac{bu}{v} & d\end{bmatrix}\\
    &= \begin{bmatrix}(1+uv)(bu+d)+bu & b(1+uv)+vd \\ u(bu+d)+\frac{bu}{v} & bu+d\end{bmatrix}.
\end{align*}
It is easy to see that this matrix has the desired form, proving the first part of the proposition.

Suppose $M=(L_uR_v)^nL_u$. The result is true for $n=0$, where $M=\begin{bmatrix}1 & 0 \\ u & 1\end{bmatrix}$. Suppose that $n=k$ and $M =\begin{bmatrix}d & b \\ \frac{u}{v}(b+vd) & d\end{bmatrix}$, then
\begin{align*}
    (L_uR_v)^{k+1}L_u &= L_uR_vM\\
    &=\begin{bmatrix}1 & v \\ u & 1+uv\end{bmatrix}\begin{bmatrix}d & b \\ \frac{u}{v}(b+vd) & d\end{bmatrix}\\
    &=\begin{bmatrix} d+u(b+vd) & b+vd \\ du+\frac{u}{v}(1+uv)(b+vd) & bu+d(1+uv)\end{bmatrix}.
\end{align*}
It is easy to see that this matrix has the desired form, proving the second part of the proposition.
\end{proof}

Given Proposition~\ref{Symm} and Corollary~\ref{flip} below, we can also show that $(L_uR_v)^n=\begin{bmatrix}d & b \\ \frac{bu}{v} & bu+d\end{bmatrix}$ and $(R_vL_u)^nR_v=\begin{bmatrix}d & b+vd \\ \frac{bu}{v} & d\end{bmatrix}$ for some integers $b$ and $d$.

It remains to prove (B).

\begin{proposition}\label{polyentry}
Let $M\in\mathcal{T}^{(u,v)}(I_2;n)$. Then
\begin{enumerate}
\item[(a)] $M =\begin{bmatrix}f_1(uv) & f_2(uv) \\ f_3(uv) & f_4(uv)\end{bmatrix}$ where $f_i(X)\in\mathbb{N}_0[X]$ and $\deg(f_i)\leq n$ for $i=1,2,3,4$.
\item[(b)] Futhermore,
\begin{align*}
    f_1(X) &=\sum_{i}a_iX^{\alpha_i},\\
    f_2(X) &=v\sum_{i}b_iX^{\beta_i},\\
    f_3(X) &=u\sum_{i}c_iX^{\gamma_i}\text{, and}\\
    f_4(X) &=\sum_{i}d_iX^{\delta_i}.
\end{align*}
\end{enumerate}
\end{proposition}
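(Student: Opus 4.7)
My plan is to prove parts (a) and (b) simultaneously by induction on the depth $n$. The structural claim in (b) automatically controls the total degrees in (a): a monomial $X^{\alpha_i}Y^{\alpha_i}$ in $f_1$ has total degree $2\alpha_i$, a monomial $Y\cdot X^{\beta_i}Y^{\beta_i}$ in $f_2$ has total degree $2\beta_i+1$, and similarly for $f_3,f_4$, so bounding the exponents bounds the total degree.

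The base case $n=0$ is immediate: the only vertex is $I_2$, whose entries $1,0,0,1$ trivially fit the claimed form with all exponents equal to $0$.

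For the inductive step, I would assume the claim holds at depth $n$ and take $M\in\mathcal{T}^{(u,v)}(I_2;n)$ with entries $f_1,f_2,f_3,f_4$ in the form stated in (b). A vertex at depth $n+1$ is either $L_uM$ or $R_vM$. For $L_uM$, the top row is unchanged, so it suffices to check that the new bottom-row entries $uf_1+f_3$ and $uf_2+f_4$ retain the required form. Writing
\[
uf_1+f_3 \;=\; X\sum_i a_iX^{\alpha_i}Y^{\alpha_i}+X\sum_i c_iX^{\gamma_i}Y^{\gamma_i},
\]
one sees directly that this fits the $f_3$-template with new exponents still bounded so that the total degree stays $\leq n+1$. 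For the diagonal entry, the key observation is that
\[
uf_2 \;=\; X\cdot Y\sum_i b_iX^{\beta_i}Y^{\beta_i} \;=\; \sum_i b_i(XY)^{\beta_i+1},
\]
so adding $f_4$ keeps the matched-exponent structure of the $f_4$-template. The case $M'=R_vM$ is handled symmetrically by swapping the roles of $X,Y$ and of the two rows.

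The argument is essentially bookkeeping; I do not expect a substantive obstacle. The only subtle point worth highlighting is why (b) is stated with the explicit $X$ prefactor on $f_3$ and $Y$ prefactor on $f_2$: those prefactors are exactly what combines with the $X$ introduced by $L_u$ (respectively the $Y$ introduced by $R_v$) to force the diagonal entries to stay inside $\mathbb{N}_0[XY]$, i.e., to consist only of monomials of the form $(XY)^k$. Once this dovetailing is observed, the inductive step is just direct substitution.
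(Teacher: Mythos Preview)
Your proposal is correct and follows essentially the same induction on depth that the paper uses, with the same key computation (the observation that $Xf_2'=X\cdot Y\sum_i b_iX^{\beta_i}Y^{\beta_i}=\sum_i b_i(XY)^{\beta_i+1}$ is exactly the ``dovetailing'' step the paper carries out). The only cosmetic difference is that you fold parts~(a) and~(b) into a single induction, whereas the paper runs two parallel inductions; note, however, that the statement of~(b) alone does not bound the exponents, so your opening remark that ``(b) automatically controls the total degrees in~(a)'' should be read as part of your combined inductive hypothesis rather than as a consequence of~(b) as stated.
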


\begin{proof}
\begin{enumerate}
\item[(a)] The statement is clearly true in the case where $M=I_2$.

Suppose that the statement holds for all matrices in $\mathcal{T}^{(u,v)}(I_2;k)$ for some $k\geq 0$. Let $M\in\mathcal{T}^{(u,v)}(I_2;k+1)$. Then $M\in\{L_uM', R_vM'\}$ for some $M'\in\mathcal{T}^{(u,v)}(I_2;k)$. In particular, by assumption, we have that $$M' =\begin{bmatrix}f'_1(uv) & f'_2(uv) \\ f'_3(uv) & f'_4(uv)\end{bmatrix}$$ where $f'_i(X)\in\mathbb{N}_0[X]$ and $\deg(f'_i)\leq k$ for $i=1,2,3,4$. It now follows that
\begin{equation}\label{mleftright}
M=\begin{cases}
\begin{bmatrix}f'_1(uv) & f'_2(uv) \\ uf'_1(uv)+f'_3(uv) & uf'_2(uv)+f'_4(uv)\end{bmatrix} & \text{ if $M=L_uM'$,}\\[3ex]
\begin{bmatrix}f'_1(uv)+vf'_3(uv) & f'_2(uv)+vf'_4(uv) \\ f'_3(uv) & f'_4(uv)\end{bmatrix} & \text{ if $M=R_vM'$.}\\[3ex]
\end{cases}
\end{equation}
It is clear that, in either case, the statement holds for $M$ and therefore the result follows by induction.
\item[(b)]
The statement is clearly true in the case where $M=I_2$.

Suppose that the statement holds for all matrices in $\mathcal{T}^{(u,v)}(I_2;k)$ for some $k\geq 0$. Let $M\in\mathcal{T}^{(u,v)}(I_2;k+1)$. Then $M\in\{L_uM', R_vM'\}$ for some $M'\in\mathcal{T}^{(u,v)}(I_2;k)$. Suppose $M=L_uM'$. By assumption, we have that
\begin{align*}
    f'_1(X) &=\sum_{i}a_iX^{\alpha_i},\\
    f'_2(X) &=v\sum_{i}b_iX^{\beta_i},\\
    f'_3(X) &=u\sum_{i}c_iX^{\gamma_i}\text{, and}\\
    f'_4(X) &=\sum_{i}d_iX^{\delta_i}.
\end{align*}
Using~\eqref{mleftright}, it follows that
\begin{align*}
    f_1(X) &= f'_1(X)\\
    &=\sum_{i}a_iX^{\alpha_i},\\
    f_2(X) &= f'_2(X)\\
    &=v\sum_{i}b_iX^{\beta_i},\\
    f_3(X) &= uf'_1(X) + f'_3(X)\\
    &=u\sum_{i}a_iX^{\alpha_i}+u\sum_{i}c_iX^{\gamma_i}, \text{ and}
\end{align*}
\begin{align*}
    uf'_2(X)+f'_4(X) &=uv\sum_{i}b_iX^{\beta_i}+\sum_{i}d_iX^{\delta_i}.
\end{align*}
Note that since
\begin{align*}
    uf'_2(uv)+f'_4(uv) &=uv\sum_{i}b_i(uv)^{\beta_i}+\sum_{i}d_i(uv)^{\delta_i}\\
    &= \sum_{i}b_i(uv)^{\beta_i+1}+\sum_{i}d_i(uv)^{\delta_i},
\end{align*}
the statement holds with $f_4(X)=\sum_{i}b_iX^{\beta_i+1}+\sum_{i}d_iX^{\delta_i}$.

A similar argument applies in the case when $M=R_vM'$.

Having exhausted all possibilities, the statement holds for $M$ and therefore the result follows by induction.
\end{enumerate}
\end{proof}

The following proposition serves two purposes. It addresses the case $v>u$ by showing that $\mu(\mathcal{T}^{(u,v)}(I_2;n))=\mu(\mathcal{T}^{(v,u)}(I_2;n))$ and it is needed for the proof of (B).

\begin{proposition}\label{uvvu}
Let $n\geq 1$ and $i\in\{1,\ldots,2^n\}$. If $c_{I_2}^{(u,v)}(n,i)= \begin{bmatrix} a & b\\ c & d \end{bmatrix}$, then
$c_{I_2}^{(v,u)}(n,2^n+1-i)= \begin{bmatrix} d & c\\ b & a \end{bmatrix}$.
\end{proposition}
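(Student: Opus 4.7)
The plan is to prove Proposition~\ref{uvvu} by induction on $n$. Before starting the induction, I would motivate the result with the following algebraic observation. Let $J=\begin{bmatrix}0 & 1\\1 & 0\end{bmatrix}$. Then $J^{2}=I_{2}$, and the map $M\mapsto JMJ$ realizes precisely the entry rearrangement $\begin{bmatrix}a & b\\c & d\end{bmatrix}\mapsto\begin{bmatrix}d & c\\b & a\end{bmatrix}$. A direct computation gives $JL_{u}J=R_{u}$ and $JR_{v}J=L_{v}$. Consequently, conjugating any product of $L_{u}$'s and $R_{v}$'s by $J$ produces the same word with each $L_{u}$ replaced by $R_{u}$ and each $R_{v}$ replaced by $L_{v}$, in the same order --- and these are exactly the generators used to build $\mathcal{T}^{(v,u)}(I_{2})$, except that the roles of left-child-multiplication and right-child-multiplication are swapped.

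For the base case $n=1$, I would check directly that $c_{I_{2}}^{(u,v)}(1,1)=L_{u}$ and $c_{I_{2}}^{(u,v)}(1,2)=R_{v}$ transform under the entry swap to $R_{u}=c_{I_{2}}^{(v,u)}(1,2)$ and $L_{v}=c_{I_{2}}^{(v,u)}(1,1)$, respectively, which agrees with the claimed mirror positions $2^{1}+1-1=2$ and $2^{1}+1-2=1$.

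For the inductive step, assume the claim holds for row $n$ and let $M=c_{I_{2}}^{(u,v)}(n,j)=\begin{bmatrix}a & b\\c & d\end{bmatrix}$. Any node in row $n+1$ of $\mathcal{T}^{(u,v)}(I_{2})$ is either the $L_{u}$-child of some such $M$ at position $2j-1$, or the $R_{v}$-child at position $2j$. By the inductive hypothesis, $c_{I_{2}}^{(v,u)}(n,2^{n}+1-j)=\begin{bmatrix}d & c\\b & a\end{bmatrix}$, whose $L_{v}$-child and $R_{u}$-child sit in row $n+1$ of $\mathcal{T}^{(v,u)}(I_{2})$ at positions $2^{n+1}+1-2j$ and $2^{n+1}+2-2j$, respectively. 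The inductive step then amounts to two computations: applying the entry swap to $L_{u}M=\begin{bmatrix}a & b\\ua+c & ub+d\end{bmatrix}$ gives $\begin{bmatrix}ub+d & ua+c\\b & a\end{bmatrix}$, which coincides with $R_{u}\begin{bmatrix}d & c\\b & a\end{bmatrix}$ at position $2^{n+1}+2-2j=2^{n+1}+1-(2j-1)$; and applying the entry swap to $R_{v}M$ yields the $L_{v}$-child at position $2^{n+1}+1-2j$. Both verifications are immediate from the conjugation identities $JL_{u}J=R_{u}$ and $JR_{v}J=L_{v}$.

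The main obstacle is not algebraic but combinatorial: one must carefully observe that mirroring the horizontal position inside a row also flips the notion of left-child vs.\ right-child, so the $L_{u}$-child on the $(u,v)$-side corresponds to the $R_{u}$-child (not the $L_{v}$-child) on the $(v,u)$-side. Once this index bookkeeping --- namely the pairing $2j-1\leftrightarrow 2^{n+1}+2-2j$ and $2j\leftrightarrow 2^{n+1}+1-2j$ --- is set up correctly, the algebraic verification is routine and the induction closes.
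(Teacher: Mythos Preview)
Your proposal is correct and follows essentially the same route as the paper: induction on $n$, with the base case $n=1$ verified directly and the inductive step handled by checking that left and right children on the $(u,v)$-side correspond to right and left children at the mirrored positions on the $(v,u)$-side. The only cosmetic difference is that you package the entry-swap verification via the conjugation identities $JL_uJ=R_u$ and $JR_vJ=L_v$, whereas the paper writes out the matrix products explicitly and handles only the odd-$i$ case in detail, leaving the even case as analogous.
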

\begin{proof}
We have that
\[c_{I_2}^{(u,v)}(1,1) = L_u = \begin{bmatrix} 1 & 0\\ u & 1 \end{bmatrix}, \quad
c_{I_2}^{(v,u)}(1,2) = R_u = \begin{bmatrix} 1 & u\\ 0 & 1 \end{bmatrix}, \]
\[c_{I_2}^{(u,v)}(1,2) = R_v = \begin{bmatrix} 1 & v\\ 0 & 1 \end{bmatrix}, \quad
c_{I_2}^{(v,u)}(1,1) = L_v = \begin{bmatrix} 1 & 0\\ v & 1 \end{bmatrix}. \]
This shows that the result is true when $n=1$. Suppose that it is also true for all matrices in the $k^{\text{th}}$ row. Take an odd $i$ in $\{1,\ldots, 2^{k+1}\}$.   Assume that $c_{I_2}^{(u,v)}(k,(i+1)/2)=\begin{bmatrix}
a' & b' \\ c' & d' \end{bmatrix}$. Then
\begin{align*}
c_{I_2}^{(u,v)}(k+1,i) &= L_u\cdot c_{I_2}^{(u,v)}(k,(i+1)/2)\\
&= \begin{bmatrix} a' & b' \\ ua'+c' & ub'+d' \end{bmatrix}
\end{align*}
and
\begin{align*}
c_{I_2}^{(v,u)}(k+1,2^{k+1}+1-i)&= R_u\cdot c_{I_2}^{(v,u)}(k,(2^{k+1}+1-i)/2)\\
&= R_u\cdot  \begin{bmatrix} d' & c'\\ b' & a' \end{bmatrix}\\
&= \begin{bmatrix}  ub'+d' &  ua'+c' \\ b' & a' \end{bmatrix},
\end{align*}
since $2^{k+1}+1-i$ is even and $(2^{k+1}+1-i)/2=2^k+1-(i+1)/2$. When $i$ is even, the proof follows in a similar way. The result follows by induction.
\end{proof}

Let $M$ be a vertex in $\mathcal{R}^{(u,v)}$. By Proposition~\ref{uvvu}, there is a matrix $M'$ that is a vertex in $\mathcal{L}^{(v,u)}$ whose entries and depth are the same as $M$.
By Proposition~\ref{polyentry} part (a), the entries of $M'$ are polynomials in $u$ and $v$. Interchanging $uv$, we immediately obtain a relationship between the entries of matrices in $\mathcal{L}^{(u,v)}$ and $\mathcal{R}^{(u,v)}$ of the same depth. Corollary~\ref{flip} makes the above relationship precise (see Figure~\ref{fig:uvvu}).

\begin{corollary}\label{flip}
Let $n\geq 1$ and $i\in\{1,\ldots,2^n\}$. If $c_{I_2}^{(u,v)}(n,i)=\begin{bmatrix}f_1(uv) & f_2(uv) \\ f_3(uv) & f_4(uv)\end{bmatrix}$, then $c_{I_2}^{(u,v)}(n,2^n+1-i)=\begin{bmatrix}f_4(uv) & \frac{vf_3(uv)}{u} \\ \frac{uf_2(uv)}{v} & f_1(uv)\end{bmatrix}$.
\end{corollary}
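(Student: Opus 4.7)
The plan is to combine Proposition~\ref{uvvu} and Proposition~\ref{polyentry}(a), together with the observation that the polynomial description of the entries in the $(u,v)$-tree is ``symmetric'' in $u$ and $v$: the abstract bivariate polynomials governing the entries of $c_{I_2}^{(u,v)}(n,i)$ are the same ones governing $c_{I_2}^{(v,u)}(n,i)$ with the two formal variables interchanged. Once this observation is in hand, the corollary follows from a short substitution.

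First, I would note that the path from the root to the $i$-th node in row $n$ is a fixed word $w\in\{L,R\}^n$, identical in both $\mathcal{T}^{(u,v)}(I_2)$ and $\mathcal{T}^{(v,u)}(I_2)$; only the scalar parameters attached to $L$- and $R$-steps differ. Re-running the induction from the proof of Proposition~\ref{polyentry}(a) with $u$ and $v$ swapped (so that $L$-steps now contribute a factor of $v$ and $R$-steps a factor of $u$) shows that the entries of $c_{I_2}^{(v,u)}(n,i)$ are obtained from those of $c_{I_2}^{(u,v)}(n,i)$ by interchanging the two variables. Therefore, writing $c_{I_2}^{(u,v)}(n,i)=\begin{bmatrix}f_1(u,v) & f_2(u,v) \\ f_3(u,v) & f_4(u,v)\end{bmatrix}$, we may conclude that $c_{I_2}^{(v,u)}(n,i)=\begin{bmatrix}f_1(v,u) & f_2(v,u) \\ f_3(v,u) & f_4(v,u)\end{bmatrix}$.

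Next, I would apply Proposition~\ref{uvvu} with the roles of $u$ and $v$ interchanged, which asserts that $c_{I_2}^{(v,u)}(n,i)$ and $c_{I_2}^{(u,v)}(n,2^n+1-i)$ are related by reflection across the anti-diagonal. Substituting the polynomial expressions from the previous step immediately yields $c_{I_2}^{(u,v)}(n,2^n+1-i)=\begin{bmatrix}f_4(v,u) & f_3(v,u) \\ f_2(v,u) & f_1(v,u)\end{bmatrix}$, which is exactly the corollary.

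The only real subtlety is the first step: justifying that the same polynomial skeleton applies to the $(v,u)$-tree with $u$ and $v$ swapped. This is essentially automatic from the construction, since the recursion in Proposition~\ref{polyentry}(a) is completely symmetric in the two scalar parameters once one tracks which variable enters on $L$- versus $R$-steps. After that, the corollary is merely a one-line chain of identifications requiring no further computation.
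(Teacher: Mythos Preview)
Your proposal is correct and is essentially the paper's own argument. The paper does not give a standalone proof of Corollary~\ref{flip}; it simply says that combining Proposition~\ref{uvvu} with Proposition~\ref{polyentry}(a) and then ``interchanging $u$ and $v$'' yields the result, which is exactly the two-step substitution you carry out (your first step makes explicit the observation underlying the paper's ``interchanging $u$ and $v$'' remark, and your second step is Proposition~\ref{uvvu} with the roles of $u$ and $v$ swapped).
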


\begin{figure}[ht!]
\begin{center}
\subfigure[The first three rows of $\mathcal{T}^{(u,v)}(I_2)$.]{
\begin{tikzpicture}[every tree node/.style={font=\small,anchor=base}, sibling distance=10pt]
\tikzset{level distance=35pt}
\Tree[.${\begin{bmatrix} 1 & 0\\ 0 & 1\end{bmatrix}}$ [.${\begin{bmatrix} 1 & 0\\ u & 1\end{bmatrix}}$ [.${\begin{bmatrix} 1 & 0\\ 2u & 1\end{bmatrix}}$ ]
   [.${\begin{bmatrix} 1+uv & v\\ u & 1\end{bmatrix}}$ ] ] [.${\begin{bmatrix} 1 & v\\ 0 & 1\end{bmatrix}}$ [.${\begin{bmatrix} 1 & v\\ u & 1+uv\end{bmatrix}}$ ]
   [.${\begin{bmatrix} 1 & 2v\\ 0 & 1\end{bmatrix}}$ ] ]]
\end{tikzpicture}}
\qquad
\subfigure[The first three rows of  $\mathcal{T}^{(v,u)}(I_2)$.]{
\begin{tikzpicture}[every tree node/.style={font=\small,anchor=base}, sibling distance=10pt]
\tikzset{level distance=35pt}
\Tree[.${\begin{bmatrix} 1 & 0\\ 0 & 1\end{bmatrix}}$ [.${\begin{bmatrix} 1 & 0\\ v & 1\end{bmatrix}}$ [.${\begin{bmatrix} 1 & 0\\ 2u & 1\end{bmatrix}}$ ]
   [.${\begin{bmatrix} 1+uv & u\\ v & 1\end{bmatrix}}$ ] ] [.${\begin{bmatrix} 1 & u\\ 0 & 1\end{bmatrix}}$ [.${\begin{bmatrix} 1 & u\\ v & 1+uv\end{bmatrix}}$ ]
   [.${\begin{bmatrix} 1 & 2u\\ 0 & 1\end{bmatrix}}$ ] ]]
\end{tikzpicture}}
\end{center}
\caption{A side-by-side comparison of the first three rows of $\mathcal{T}^{(u,v)}(I_2)$ and $\mathcal{T}^{(v,u)}(I_2)$.}\label{fig:uvvu}
\end{figure}

We are now in a position to prove (B).

\begin{proposition}\label{leftdom}
Let $M = c_{I_2}^{(u,v)}(n,i)$ for some $1\leq i\leq 2^{n-1}$ and\\ $M'=c_{I_2}^{(u,v)}(n,2^n+1-i)$ with $u\geq v$. Then $\mu(M')\leq \mu(M)$.
\end{proposition}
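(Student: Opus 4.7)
The strategy is to assemble three earlier results: Lemma~\ref{domcol}, which says that $\mu$ for a descendant of $L_u$ is attained in the left column and $\mu$ for a descendant of $R_v$ is attained in the right column; Corollary~\ref{flip}, which writes the mirror-image matrix $M'$ in terms of the entries of $M$ with $u$ and $v$ swapped; and Proposition~\ref{polyentry}(b), which constrains the bivariate polynomials representing those entries. Throughout I work under the standing assumption $u\geq v$ of this part of the paper.

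Because $i\leq 2^n/2$, the matrix $M$ is a descendant of $L_u$, so $M\in\mathcal{L}^{(u,v)}$, while $M'=c_{I_2}^{(u,v)}(n,2^n+1-i)$ is a descendant of $R_v$, so $M'\in\mathcal{R}^{(u,v)}$. Hence, by Lemma~\ref{domcol}, $\mu(M)$ is the maximum of the two left-column entries of $M$ and $\mu(M')$ is the maximum of the two right-column entries of $M'$. Writing $M=\begin{bmatrix}f_1(u,v) & f_2(u,v)\\ f_3(u,v) & f_4(u,v)\end{bmatrix}$, Corollary~\ref{flip} then gives that the right column of $M'$ is $\begin{bmatrix}f_3(v,u)\\ f_1(v,u)\end{bmatrix}$. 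Now Proposition~\ref{polyentry}(b) tells us that $f_1(X,Y)$ involves only monomials with equal exponents in $X$ and $Y$, so $f_1(u,v)=f_1(v,u)$, and that $f_3(X,Y)=X\,\phi(XY)$ for some $\phi\in\mathbb{N}_0[t]$, giving $f_3(u,v)=u\,\phi(uv)$ and $f_3(v,u)=v\,\phi(uv)$.

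Combining the previous two steps yields $\mu(M)=\max\{f_1(u,v),\,u\,\phi(uv)\}$ and $\mu(M')=\max\{f_1(u,v),\,v\,\phi(uv)\}$. Since $\phi$ has non-negative coefficients, $\phi(uv)\geq 0$, and the hypothesis $u\geq v$ gives $u\,\phi(uv)\geq v\,\phi(uv)$; the claim $\mu(M')\leq\mu(M)$ follows immediately. I do not anticipate a real obstacle here, since all of the substantive content has already been absorbed into the three cited results; the only care needed is to confirm that the indexing $1\leq i\leq 2^n/2$ really does place $M$ in the $L_u$-subtree (so that Lemma~\ref{domcol} and Corollary~\ref{flip} apply as described) and to remember that $f_1$ and $f_4$, being polynomials in the product $XY$, are automatically invariant under the swap $u\leftrightarrow v$, so they contribute identical terms to the two maxima and drop out of the comparison.
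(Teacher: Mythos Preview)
Your proof is correct and follows essentially the same approach as the paper: both invoke Lemma~\ref{domcol}, Corollary~\ref{flip}, and Proposition~\ref{polyentry}(b) to reduce the comparison to the left-column polynomials $f_1$ and $f_3$ with $u$ and $v$ swapped. The only cosmetic difference is that the paper splits into the cases ``$M$ is $u$-LD'' and ``$M$ is $v$-UD'' to identify which of $f_1,f_3$ realizes each maximum, whereas you bypass this by directly comparing $\max\{f_1(u,v),\,u\phi(uv)\}$ with $\max\{f_1(u,v),\,v\phi(uv)\}$; your version is marginally cleaner but the content is identical.
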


\begin{proof}
We have that $M =\begin{bmatrix}f_1(uv) & f_2(uv) \\ f_3(uv) & f_4(uv)\end{bmatrix}$ where $f_i(X)\in\mathbb{N}_0[X]$ for $i=1,2,3,4$ satisfy the conclusion of Proposition~\ref{polyentry}. By Corollary~\ref{flip}, $M'=\begin{bmatrix}f_4(uv) & \frac{vf_3(uv)}{u} \\ \frac{uf_2(uv)}{v} & f_1(uv)\end{bmatrix}$. By Lemma~\ref{domcol}, $\mu(M)=\max\{f_1(uv),f_3(uv)\}$ and $\mu(M')=\max\{f_1(uv),\frac{vf_3(uv)}{u}\}$.

If $M$ is $u$-LD, then $M'$ is $v$-UD. In particular,
\begin{align*}
\mu(M') &= \frac{vf_3(uv)}{u}\\
&= \frac{v}{u}\cdot u\sum_{i}c_i(uv)^{\gamma_i}\\
&\leq u\sum_{i}c_i(uv)^{\gamma_i}\\
&= f_3(uv)\\
&= \mu(M).
\end{align*}

If $M$ is $v$-UD, then $M'$ is $u$-LD. In particular,
\begin{align*}
\mu(M') &= f_1(uv)\\
&= \mu(M).
\end{align*}
\end{proof}

\begin{proof}[Proof of Theorem~\ref{main}]
The proofs of (A) and (B) using Lemma~\ref{domcol} and Proposition~\ref{leftdom}, respectively, complete the proof of~\eqref{oddmax} for all $u$ and $v$.

Applying Proposition~\ref{ineq} to the matrix $\overline{R}_v=R_v\begin{bmatrix}0 & 1 \\ 1 & 0\end{bmatrix}$, we get that, for $n\geq 0$,
\begin{align*}
\mu(\mathcal{T}^{(u,v)}(R_v;2n+1)) &= \mu(\mathcal{T}^{u,v}(\overline{R}_v;2n+1))\\
&= \mu((L_uR_v)^nL_u\overline{R}_v)\\
&= \mu((L_uR_v)^{n+1})
\end{align*}
since right multiplication by $\begin{bmatrix}0 & 1 \\ 1 & 0\end{bmatrix}$ simply exchanges the columns of a matrix. Note that, by Proposition~\ref{leftdom}, $\mu((R_vL_u)^{n+1})=\mu((L_uR_v)^{n+1})$.

Suppose that there exists an $M\in\mathcal{T}^{(u,v)}(I_2;2n+2)$ with $\mu(M)>\mu((R_vL_u)^{n+1})$. Proposition~\ref{leftdom} shows that we can assume that $M$ is a vertex in $\mathcal{L}^{(u,v)}$, so there is an $i$ such that $1\leq i\leq 2^{2n+1}$ and $M=c_{I_2}^{(u,v)}(2n+2,i)$. Let $M'=c_{I_2}^{(u,v)}(2n+2,2^{2n+2}+1-i)$. (Note that $M'$ is a vertex in $\mathcal{R}^{(u,v)}$.) By Corollary~\ref{flip} and Proposition~\ref{leftdom}, we obtain a contradiction if $M$ is $v$-UD. Assuming that $M$ is $u$-LD, $M=\begin{bmatrix}a & \ast \\ ua+c & \ast\end{bmatrix}$ where $\begin{bmatrix}a & \ast \\ c & \ast\end{bmatrix}$, the parent of $M$, is some matrix satisfying the result of Proposition~\ref{ineq}. That is, either $a\leq A_n$ and $c\leq C_n$ (with $ua\leq c$) or $a\leq \frac{v}{u}C_n$ and $c\leq A_n$. In either case it follows that
\begin{align*}
    \mu(M) &= ua+c\\
    &\leq A_n+vC_n\\
    &= \mu((R_vL_u)^{n+1}),
\end{align*}
a clear contradition. Therefore, no such $M$ exists, completing the proof of~\eqref{evenmax} when $u\geq v>1$.

For $u\geq v = 1$,~\eqref{evenmax} follows from Proposition~\ref{v1version} and Proposition~\ref{domineq} part (c) since, for $n\geq 0$,
\begin{align*}
\mu(\mathcal{T}^{(u,1)}(I_2;2n+2)) &\leq uF_n(u)+G_n(u)\\
&= \mu(L_u(L_uR_1)^nL_u).
\end{align*}
Finally,~\eqref{evenmax} follows for $v>u$ using a similar argument to (A) and (B).
\end{proof}

\begin{landscape}
\appendix
\section{Examples of PLFT $(u,v)$-Calkin-Wilf trees}
\begin{figure}[ht!]
\begin{center}
\begin{tikzpicture}[every tree node/.style={font=\tiny,anchor=base}, sibling distance=-4pt]
\tikzset{level distance=60pt}
\Tree[.${\begin{bmatrix} \circled{\textbf{1}} & 0\\ 0 & 1\end{bmatrix}}$ [.${\begin{bmatrix} 1 & 0\\ \circled{\textbf{1}} & 1\end{bmatrix}}$ [.${\begin{bmatrix} 1 & 0\\ 2 & 1\end{bmatrix}}$ [.${\begin{bmatrix} 1 & 0\\ 3 & 1\end{bmatrix}}$ ${\begin{bmatrix} 1 & 0\\ 4 & 1\end{bmatrix}}$ ${\begin{bmatrix} 4 & 1\\ 3 & 1\end{bmatrix}}$ ] [.${\begin{bmatrix} 3 & 1\\ 2 & 1\end{bmatrix}}$ ${\begin{bmatrix} 3 & 1\\ 5 & 2\end{bmatrix}}$ ${\begin{bmatrix} 5 & 2\\ 2 & 1\end{bmatrix}}$ ] ]
   [.${\begin{bmatrix} \circled{\textbf{2}} & 1\\ 1 & 1\end{bmatrix}}$
   [.${\begin{bmatrix} 2 & 1\\ \circled{\textbf{3}} & 2\end{bmatrix}}$
   ${\begin{bmatrix} 2 & 1\\ 5 & 3\end{bmatrix}}$
   ${\begin{bmatrix} \circled{{\textbf{5}}} & 3\\ 3 & 2\end{bmatrix}}$ ]
   [.${\begin{bmatrix} 3 & 2\\ 1 & 1\end{bmatrix}}$ ${\begin{bmatrix} 3 & 2\\ 4 & 3\end{bmatrix}}$
   ${\begin{bmatrix} 4 & 3\\ 1 & 1\end{bmatrix}}$ ] ] ] [.${\begin{bmatrix} 1 & 1\\ 0 & 1\end{bmatrix}}$ [.${\begin{bmatrix} 1 & 1\\ 1 & 2\end{bmatrix}}$
   [.${\begin{bmatrix} 1 & 1\\ 2 & 3\end{bmatrix}}$ ${\begin{bmatrix} 1 & 1\\ 3 & 4\end{bmatrix}}$ ${\begin{bmatrix} 3 & 4\\ 2 & 3\end{bmatrix}}$ ]  [.${\begin{bmatrix} 2 & 3\\ 1 & 2\end{bmatrix}}$ ${\begin{bmatrix} 2 & 3\\ 3 & 5\end{bmatrix}}$ ${\begin{bmatrix} 3 & 5\\ 1 & 2\end{bmatrix}}$ ]  ]
   [.${\begin{bmatrix} 1 & 2\\ 0 & 1\end{bmatrix}}$ [.${\begin{bmatrix} 1 & 2\\ 1 & 3\end{bmatrix}}$
   ${\begin{bmatrix} 1 & 2\\ 2 & 5\end{bmatrix}}$ ${\begin{bmatrix} 2 & 5\\ 1 & 3\end{bmatrix}}$ ] [.${\begin{bmatrix} 1 & 3\\ 0 & 1\end{bmatrix}}$
   ${\begin{bmatrix} 1 & 3\\ 1 & 4\end{bmatrix}}$
   ${\begin{bmatrix} 1 & 4\\ 0 & 1\end{bmatrix}}$  ] ]  ] ]
\end{tikzpicture}
\end{center}
\caption{The first five rows of the $\mathcal{T}^{(1,1)}(I_2)$ tree.}\label{fig:idexample1}
\end{figure}
\end{landscape}

\begin{landscape}
\begin{figure}[ht!]
\begin{center}
\begin{tikzpicture}[every tree node/.style={font=\tiny,anchor=base}, sibling distance=-6pt]
\tikzset{level distance=60pt}
\Tree[.${\begin{bmatrix} \circled{\textbf{1}} & 0\\ 0 & 1\end{bmatrix}}$ [.${\begin{bmatrix} 1 & 0\\ \circled{\textbf{5}} & 1\end{bmatrix}}$ [.${\begin{bmatrix} 1 & 0\\ 10 & 1\end{bmatrix}}$ [.${\begin{bmatrix} 1 & 0\\ 15 & 1\end{bmatrix}}$ ${\begin{bmatrix} 1 & 0\\ 20 & 1\end{bmatrix}}$ ${\begin{bmatrix} 31 & 2\\ 15 & 1\end{bmatrix}}$ ] [.${\begin{bmatrix} 21 & 2\\ 10 & 1\end{bmatrix}}$ ${\begin{bmatrix} 21 & 2\\ 115 & 11\end{bmatrix}}$ ${\begin{bmatrix} 41 & 4\\ 10 & 1\end{bmatrix}}$ ] ]
   [.${\begin{bmatrix} {\circled{\textbf{11}}} & 2\\ 5 & 1\end{bmatrix}}$
   [.${\begin{bmatrix} 11 & 2\\ \circled{\textbf{60}} & 11\end{bmatrix}}$
   ${\begin{bmatrix} 11 & 2\\ 115 & 21\end{bmatrix}}$
   ${\begin{bmatrix} \circled{\textbf{131}} & 24\\ 60 & 11\end{bmatrix}}$ ]
   [.${\begin{bmatrix} 21 & 4\\ 5 & 1\end{bmatrix}}$ ${\begin{bmatrix} 21 & 4\\ 110 & 21\end{bmatrix}}$
   ${\begin{bmatrix} 31 & 6\\ 5 & 1\end{bmatrix}}$ ] ] ] [.${\begin{bmatrix} 1 & 2\\ 0 & 1\end{bmatrix}}$ [.${\begin{bmatrix} 1 & 2\\ 5 & {\textbf{11}}\end{bmatrix}}$
   [.${\begin{bmatrix} 1 & 2\\ 10 & 21\end{bmatrix}}$ ${\begin{bmatrix} 1 & 2\\ 15 & 31\end{bmatrix}}$
   ${\begin{bmatrix} 21 & 44\\ 10 & 21\end{bmatrix}}$ ] [.${\begin{bmatrix} 11 & 24\\ 5 & 11\end{bmatrix}}$ ${\begin{bmatrix} 11 & 24\\ 60 & {\textbf{131}}\end{bmatrix}}$
   ${\begin{bmatrix} 21 & 46\\ 5 & 11\end{bmatrix}}$ ] ]
   [.${\begin{bmatrix} 1 & 4\\ 0 & 1\end{bmatrix}}$ [.${\begin{bmatrix} 1 & 4\\ 5 & 21\end{bmatrix}}$
   ${\begin{bmatrix} 1 & 4\\ 10 & 41\end{bmatrix}}$
   ${\begin{bmatrix} 11 & 46\\ 5 & 21\end{bmatrix}}$ ]
   [.${\begin{bmatrix} 1 & 6\\ 0 & 1\end{bmatrix}}$
   ${\begin{bmatrix} 1 & 6\\ 5 & 31\end{bmatrix}}$
   ${\begin{bmatrix} 1 & 8\\ 0 & 1\end{bmatrix}}$  ] ]  ] ]
\end{tikzpicture}
\end{center}
\caption{The first five rows of the $\mathcal{T}^{(5,2)}(I_2)$ tree.}\label{fig:idexample2}
\end{figure}
\end{landscape}

\begin{landscape}
\begin{figure}[ht!]
\begin{center}
\begin{tikzpicture}[every tree node/.style={font=\tiny,anchor=base}, sibling distance=-6pt]
\tikzset{level distance=60pt}
\Tree[.${\begin{bmatrix} 1& 0\\ 0 & \circled{\textbf{1}} \end{bmatrix}}$ [.${\begin{bmatrix} 1 & 0\\ 2 & 1\end{bmatrix}}$ [.${\begin{bmatrix} 1 & 0\\ 4 & 1\end{bmatrix}}$ [.${\begin{bmatrix} 1 & 0\\ 6 & 1\end{bmatrix}}$ ${\begin{bmatrix} 1 & 0\\ 8 & 1\end{bmatrix}}$ ${\begin{bmatrix} 31 & 5\\ 6 & 1\end{bmatrix}}$ ] [.${\begin{bmatrix} 21 & 5\\ 4 & 1\end{bmatrix}}$ ${\begin{bmatrix} 21 & 5\\ 46 & 11\end{bmatrix}}$ ${\begin{bmatrix} 41 & 10\\ 4 & 1\end{bmatrix}}$ ] ]
   [.${\begin{bmatrix} 11 & 5\\ 2 & 1\end{bmatrix}}$
   [.${\begin{bmatrix} 11 & 5\\ 24 & 11\end{bmatrix}}$
   ${\begin{bmatrix} 11 & 5 \\ 46 & 21\end{bmatrix}}$
   ${\begin{bmatrix} 131 & 60\\ 24 & 11\end{bmatrix}}$ ]
   [.${\begin{bmatrix} 21 & 10\\ 2 & 1\end{bmatrix}}$ ${\begin{bmatrix} 21 & 10\\ 44 & 21\end{bmatrix}}$
   ${\begin{bmatrix} 31 & 15\\ 2 & 1\end{bmatrix}}$ ] ] ] [.${\begin{bmatrix} 1 & \circled{\textbf{5}}\\ 0 & 1\end{bmatrix}}$ [.${\begin{bmatrix} 1 & 5\\ 2 & \circled{\textbf{11}}\end{bmatrix}}$
   [.${\begin{bmatrix} 1 & 5\\ 4 & 21\end{bmatrix}}$ ${\begin{bmatrix} 1 & 5\\ 6 & 31\end{bmatrix}}$
   ${\begin{bmatrix} 21 & 110\\ 4 & 21\end{bmatrix}}$ ]
   [.${\begin{bmatrix} 11 & \circled{\textbf{60}}\\ 2 & 11\end{bmatrix}}$ ${\begin{bmatrix} 11 & 60\\ 24 & \circled{\textbf{131}}\end{bmatrix}}$
   ${\begin{bmatrix} 21 & 115\\ 2 & 11\end{bmatrix}}$ ] ]
   [.${\begin{bmatrix} 1 & 10\\ 0 & 1\end{bmatrix}}$ [.${\begin{bmatrix} 1 & 10\\ 2 & 21\end{bmatrix}}$
   ${\begin{bmatrix} 1 & 10\\ 4 & 41\end{bmatrix}}$
   ${\begin{bmatrix} 11 & 115\\ 2 & 21\end{bmatrix}}$ ]
   [.${\begin{bmatrix} 1 & 15\\ 0 & 1\end{bmatrix}}$
   ${\begin{bmatrix} 1 & 15\\ 2 & 31\end{bmatrix}}$
   ${\begin{bmatrix} 1 & 20\\ 0 & 1\end{bmatrix}}$  ] ]  ] ]
\end{tikzpicture}
\end{center}
\caption{The first five rows of the  $\mathcal{T}^{(2,5)}(I_2)$ tree.}\label{fig:idexample3}
\end{figure}
\end{landscape}

\begin{landscape}
\begin{figure}[ht!]
\begin{center}
\begin{tikzpicture}[every tree node/.style={font=\tiny,anchor=base}, sibling distance=-6pt]
\tikzset{level distance=60pt}
\Tree[.${\begin{bmatrix} 1& 0\\ 0 & 1 \end{bmatrix}}$ [.${\begin{bmatrix} 1 & 0\\ \circled{\textbf{5}} & 1\end{bmatrix}}$ [.${\begin{bmatrix} 1 & 0\\ \circled{\textbf{10}} & 1\end{bmatrix}}$ [.${\begin{bmatrix} 1 & 0\\ 15 & 1\end{bmatrix}}$ ${\begin{bmatrix} 1 & 0\\ 20 & 1\end{bmatrix}}$ ${\begin{bmatrix} 16 & 1\\ 15 & 1\end{bmatrix}}$ ] [.${\begin{bmatrix} 11 & 1\\ 10 & 1\end{bmatrix}}$ ${\begin{bmatrix} 11 & 1\\ {\textbf{65}} & 6\end{bmatrix}}$ ${\begin{bmatrix} 21 & 2\\ 10 & 1\end{bmatrix}}$ ] ]
   [.${\begin{bmatrix} 6 & 1\\ 5 & 1\end{bmatrix}}$
   [.${\begin{bmatrix} 6 & 1\\ \circled{\textbf{35}} & 6\end{bmatrix}}$
   ${\begin{bmatrix} 6 & 1 \\ \circled{\textbf{65}} & 11\end{bmatrix}}$
   ${\begin{bmatrix} 41 & 7\\ 35 & 6\end{bmatrix}}$ ]
   [.${\begin{bmatrix} 11 & 2\\ 5 & 1\end{bmatrix}}$ ${\begin{bmatrix} 11 & 2\\ 60 & 11\end{bmatrix}}$
   ${\begin{bmatrix} 16 & 3\\ 5 & 1\end{bmatrix}}$ ] ] ] [.${\begin{bmatrix} 1 & 1\\ 0 & 1\end{bmatrix}}$ [.${\begin{bmatrix} 1 & 1\\ 5 & 6\end{bmatrix}}$
   [.${\begin{bmatrix} 1 & 1\\ 10 & 11\end{bmatrix}}$ ${\begin{bmatrix} 1 & 1\\ 15 & 16\end{bmatrix}}$
   ${\begin{bmatrix} 11 & 12\\ 10 & 11\end{bmatrix}}$ ]
   [.${\begin{bmatrix} 6 & 7\\ 5 & 6\end{bmatrix}}$ ${\begin{bmatrix} 6 & 7\\ 35 & 41\end{bmatrix}}$
   ${\begin{bmatrix} 11 & 13\\ 5 & 6\end{bmatrix}}$ ] ]
   [.${\begin{bmatrix} 1 & 2\\ 0 & 1\end{bmatrix}}$ [.${\begin{bmatrix} 1 & 2\\ 5 & 11\end{bmatrix}}$
   ${\begin{bmatrix} 1 & 2\\ 10 & 21\end{bmatrix}}$
   ${\begin{bmatrix} 6 & 13\\ 5 & 11\end{bmatrix}}$ ]
   [.${\begin{bmatrix} 1 & 3\\ 0 & 1\end{bmatrix}}$
   ${\begin{bmatrix} 1 & 3\\ 5 & 16\end{bmatrix}}$
   ${\begin{bmatrix} 1 & 4\\ 0 & 1\end{bmatrix}}$  ] ]  ] ]
\end{tikzpicture}
\end{center}
\caption{The first five rows of the  $\mathcal{T}^{(5,1)}(I_2)$ tree.}\label{fig:idexample4}
\end{figure}
\end{landscape}


\end{document}